\definecolor{light}{gray}{.9}
\newcommand{\R}[1]{\mb{R}^{#1}}
\def\R{{\mathbf R}}
\def\div{{\rm div}\,}
\def\Hess{{\rm Hess\,}}
\def\Ran{{\rm Ran}}
\def\sspan{{\rm Span}}
\def \Tr{{\rm \, Tr\,}}
\def\and {{\rm \, and \,}}
\def\Ker {{\rm \, Ker  \,}}
\newtheorem{theorem}{Theorem}[section]
\newtheorem{lemma}[theorem]{Lemma}
\newtheorem{proposition}[theorem]{Proposition}
\newtheorem{corollary}[theorem]{Corollary}
\newtheorem{theorem*}{Theorem}
\begin{document}

\title{On Witten Laplacians and Brascamp-Lieb's inequality
on manifolds with boundary}

\author{Dorian~Le~Peutrec\thanks{Laboratoire de Math\'ematiques d'Orsay, Univ. Paris-Sud, CNRS, Universit\'e
Paris-Saclay, 91405 Orsay, France (dorian.lepeutrec@math.u-psud.fr)}
}

\maketitle

\begin{abstract}
In this paper, we  derive from the supersymmetry
of the Witten Laplacian
Brascamp-Lieb's type inequalities for general differential
forms  on  compact Riemannian  manifolds with  boundary.
In addition to the supersymmetry, our results essentially follow
from suitable decompositions of  the quadratic forms associated 
with the Neumann and Dirichlet self-adjoint realizations of the   Witten Laplacian.
They moreover imply the usual Brascamp-Lieb's inequality and its generalization
to compact Riemannian manifolds without boundary. 
\end{abstract}

\noindent\textit{MSC 2010: 35A23, 81Q10, 53C21, 58J32, 58J10.}\\
\textit{Brascamp-Lieb's inequality,  Witten Laplacian, Riemannian manifolds with boundary, Supersymmetry, Bakry-\'Emery tensor.}

\section{Introduction}

\subsection{Context and aim of the paper}

\noindent Let $V\in \mathcal C^{2}(\mathbf R^{n},\mathbf R)$
be a strictly convex function such that $e^{-V}\in L^{1}(\mathbf R^{n})$
and let $\nu$ be the probability measure defined by $d\nu:=\frac{e^{-V}}{\int_{\R^{n}}e^{-V}dx}\,dx$.
The classical Brascamp-Lieb's inequality proven in~\cite{BrLi} states that 
 every smooth   compactly supported function  $\omega$
satisfies the estimate
\begin{equation}
  \label{eq.BL0}
\int_{\mathbf R^{n}} \big|\,\omega\,-\,\big(\int_{\mathbf R^{n}}\omega~d\nu\big)\,\big|^{2}~d\nu
  \  \leq\  
\int_{\mathbf R^{n}} \big(\Hess V\big)^{-1}(\nabla \omega,\nabla \omega)~d\nu\,.
\end{equation}
This inequality and suitable variants have since been  e.g. used in works
such as \cite{HeSj,Sjo,NaSp,Hel,BaJeSj,BaMo,BaMo2}
studying correlation  asymptotics   in statistical mechanics. 
The latter works exploit in particular crucially some relations of the following type
and which at least go back to the work of Helffer and Sj\"ostrand \cite{HeSj}:
\begin{equation}
\label{eq.HS0}
\big\| \,\eta - \langle\eta,\frac{e^{-\frac V2}}{\| e^{-\frac V2} \|} \rangle\frac{e^{-\frac V2}}{\| e^{-\frac V2} \|}\, \big\|^{2}
\ =\ \langle \,(\Delta_{\frac V2}^{(1)})^{-1}\, \big(\,d_{\frac V2}\,\eta\,\big)\,,\,
d_{\frac V2}\,\eta\, \rangle,
\end{equation}
where $\eta \in \mathcal C^{\infty}_{\text{c}}(\mathbf R^{n})$, $\langle\cdot,\cdot \rangle$  and $\|\cdot\|$
stand for the usual $L^{2}(dx)$ inner product and norm,
$
d_{\frac V2}:= d+d\frac{V}{2}
$
and $\Delta_{\frac V2}^{(1)}$ is the Witten Laplacian acting on $1$-forms
(or equivalently on vector fields) which is given by
\begin{equation}
\label{eq.Witten-1-forms}
\Delta_{\frac V2}^{(1)}\ :=\ \Delta_{\frac V2}^{(0)}\otimes \text{Id}\,+\,\Hess V\ =\ 
\big(-\Delta+|\nabla \frac V2|^{2}-\Delta\frac V2\big)\otimes \text{Id}\,+\,\Hess V\,.
\end{equation}
 In the last relation, 
 \begin{equation}
\label{eq.Witten-0-forms}
 \Delta_{\frac V2}^{(0)}\ :=\ -\Delta+|\nabla \frac V2|^{2}-\Delta\frac V2\ =\ \big(-\div+\nabla \frac{V}{2}\big)\big(\nabla+\nabla \frac{V}{2}\big)\ =\ d^{*}_{\frac V2}\,d_{\frac V2}
 \end{equation}
 denotes the Witten Laplacian acting on functions (or equivalently on $0$-forms).
The Witten Laplacian, initially introduced in \cite{Wit}, is more generally defined
on the full algebra of differential forms  and is nonnegative and essentially self-adjoint
(when acting on smooth compactly supported forms)
 on the space of $L^{2}(dx)$
differential forms.
It is moreover
 supersymmetric, which essentially amounts, when
restricting our attention to the interplay between  $\Delta_{\frac V2}^{(0)}$ and $\Delta_{\frac V2}^{(1)}$,
to
the
intertwining relation
$$
\forall\,\eta\,\in\,\mathcal C^{\infty}_{\text{c}}(\mathbf R^{n})\, ,\quad
d_{\frac V2}\, \Delta_{\frac V2}^{(0)}\,\eta\ =\ \Delta_{\frac V2}^{(1)}\,d_{\frac V2}\,\eta\,,
$$
 which enables to prove relations of the type \eqref{eq.HS0} (when $\Delta_{\frac V2}^{(1)}$ is invertible).
 The nonnegativity of $\Delta_{\frac V2}^{(0)}$ together with the relations 
 \eqref{eq.HS0} and \eqref{eq.Witten-1-forms} then 
 easily leads  to \eqref{eq.BL0} when $V$ is strictly convex (at least formally) taking finally $\omega:=e^{\frac V 2}\eta$. 
 To connect  to some spectral properties of $\Delta_{\frac V2}^{(0)}$,
 the relation \eqref{eq.HS0}
 together with the lower bound
 $\Delta_{\frac V2}^{(1)}\geq c$ for some $c>0$ -- which is in particular satisfied if
  $\Hess V\geq c$ --
 implies, according to formula  \eqref{eq.Witten-0-forms}, 
  a spectral gap greater or equal to $c$
for  $\Delta_{\frac V2}^{(0)}$ 
(its kernel  being $\sspan\{e^{-\frac V2}\}$ as it can be seen from \eqref{eq.Witten-0-forms}).
In addition to the already mentioned \cite{Sjo,Hel} making extra 
assumptions on $V$, we refer especially 
to the very complete  \cite{Joh}
  for precise statements and proofs in relation with the above discussion.\\
  
\noindent More generally, in the case of a Riemannian manifold without boundary $\Omega$,
it is also well known that an inequality of the type \eqref{eq.BL0} holds if one
replaces
 $\Hess V$ (and the condition $\Hess V>0$ everywhere)
 by
the following quadratic form, sometimes called the Bakry-\'Emery\,(-Ricci) tensor,  
$${\rm Ric}\,+\, \Hess V\quad\text{(and if we  assume its strict positivity everywhere)}\,,$$ 
${\rm Ric}$ denoting the Ricci tensor. We refer for example to \cite[Theorem~4.9.3]{BaGeLe}
for a precise statement whose proof relies on the
supersymmetry  of the counterpart of the  Witten Laplacian in the weighted space
$L^{2}(\Omega,e^{-V}d\,\text{Vol}_{\Omega})$, sometimes called the weighted Laplacian and more precisely defined when acting on functions by
$$
L^{(0)}_{V}\ :=\ e^{\frac V2}\big(-\Delta+|\nabla \frac V2|^{2}-\Delta\frac V2\big)e^{-\frac V2}\ =\ -\Delta+\nabla V\cdot\nabla\,.
$$
This operator, unitarily equivalent to $\Delta_{\frac V2}^{(0)}$,
 is an important model of the Bakry-\'Emery theory of diffusion processes and
we refer especially in this direction to the pioneering work of Bakry and \'Emery \cite{BaEm} or to the book \cite{BaGeLe} for an overview
of the concerned literature. On its side, the Bakry-\'Emery tensor 
${\rm Ric}+ \Hess V$ -- named after \cite{BaEm} but first introduced by Lichnerowicz in \cite{Lic} -- is the natural counterpart
   of the Ricci tensor  ${\rm Ric}$ in the weighted Riemannian manifold $(\Omega,e^{-V}d\,\text{Vol}_{\Omega})$
   and we  refer for example to \cite{Lic,Lot} for some of its geometric properties.
   Let us also mention e.g. \cite{LoVi} extending this notion to metric measure
spaces.\\

\noindent
In this paper, we derive from the supersymmetry
of the Witten Laplacian
Brascamp-Lieb's type inequalities for general differential
forms  on a Riemannian  manifold with a boundary.
In addition to the supersymmetry, our results essentially follow
from suitable decompositions of  the quadratic forms associated 
with the self-adjoint Neumann and Dirichlet realizations of the Witten Laplacian
stated in Theorem~\ref{th.Witten-Bochner}.
When restricting to the interplay between $0$- and $1$-forms,
they
imply in particular the already mentioned results in the case
of $\mathbf R^{n}$ or of a compact manifold with empty boundary
as well as some results recently obtained by Kolesnikov and  Milman
in \cite{KoMi} in the case of a compact manifold with a boundary  
(see indeed Corollaries~\ref{co.BL}~and~\ref{co.main} and the corresponding remarks).

\subsection{Decomposition formulas}

\noindent
Let $\left(\Omega,g=\langle \cdot,\cdot\rangle\right)$ be   a smooth $n$-dimensional oriented connected 
and compact Riemannian manifold 
with boundary
$\partial\Omega$.
The cotangent (resp. tangent) bundle of $\Omega$ is denoted by $T^{*}\Omega$
(resp. $T\Omega$) and the exterior fiber
bundle by $\Lambda T^{*}\Omega=\oplus_{p=0}^{n}\Lambda^{p}T^{*}\Omega$.
 The fiber
bundles $T^{*}\partial\Omega$,
$T\partial\Omega$,
and $\Lambda T^{*}\partial \Omega=\oplus_{p=0}^{n-1}\Lambda^{p}T^{*}\partial\Omega$
 are defined similarly.
 The (bundle) scalar product
on $\Lambda^{p}T^{*}\Omega$  
inherited from $g$ is denoted by $\langle \cdot,\cdot \rangle_{\Lambda^{p} }$.
The space of $\mathcal{C}^{\infty}$,
$L^{2}$, etc.  sections of any of the above fiber bundles $E$, over
 $O=\Omega$ or $O=\partial \Omega$, are respectively denoted
 by $\mathcal{C}^{\infty}(O,E)$,
 $L^{2}(O,E)$, etc.. The more compact notation $\Lambda^{p}\mathcal{C}^{\infty}$, $\Lambda^{p}L^{2}$,
 etc. will also be used instead of $\mathcal{C}^{\infty}(\Omega,\Lambda^{p}T^{*}\Omega)$,
 $L^{2}(\Omega,\Lambda^{p}T^{*}\Omega)$, etc.
 and we will denote by $\mathcal L(\Lambda^{p}T^{*}\Omega)$ the space of smooth
  bundle endomorphisms of  $\Lambda^{p}T^{*}\Omega$.
  The $L^{2}$ spaces are those associated
with the respective unit volume forms $\mu$ and $\mu_{\partial\Omega}$ for the Riemannian structures on
$\Omega$ and  on $\partial \Omega$. The $\Lambda^{p}L^{2}$ scalar product and norm corresponding to $\mu$ will be denoted
by $\langle \cdot,\cdot\rangle_{\Lambda^{p}L^{2}}$ and $\|\cdot\|_{\Lambda^{p}L^{2}}$ or more simply by  $\langle \cdot,\cdot\rangle_{L^{2}}$ and $\|\cdot\|_{L^{2}}$
when no confusion is possible.\\

\noindent 
We denote by $d$ the exterior differential on $\mathcal{C}^{\infty}(\Omega,\Lambda T^{*}\Omega)$
and by $d^{*}$ its formal adjoint with respect to the $L^{2}$ scalar product.
The Hodge Laplacian is then defined on $\mathcal{C}^{\infty}(\Omega, \Lambda
T^{*}\Omega)$ by
\begin{equation}
\Delta\  := \ \Delta_{H}\ :=\ d^{*}d+dd^{*}\ =\ (d+d^{*})^{2}\,.
\end{equation}

\noindent For a (real) smooth function $f$, the distorted differential operators $d_{f}$ and $d^{*}_{f}$ are defined on 
$\mathcal{C}^{\infty}(\Omega, \Lambda
T^{*}\Omega)$ by
\begin{equation}
\label{eq.d-dstar-f-h}
d_{f}\ :=\ e^{-f}\,d\, e^{f}\quad\text{and}\quad d_{f}^{*}
\ :=\ e^{f}\, d^{*}\,  e^{-f}\,,
\end{equation}
and the Witten Laplacian $\Delta_{f}$ is 
defined on
$\mathcal{C}^{\infty}(\Omega, \Lambda
T^{*}\Omega)$
 similarly as the Hodge Laplacian
by
\begin{equation}
\Delta_{f}\ :=\ d_{f}^{*}d_{f}+d_{f}d_{f}^{*}\ =\ (d_{f}+d_{f}^{*})^{2}\,.
\end{equation}
Note
the supersymmetry structure of the Witten Laplacian acting on the complex of
differential forms: 
 for every $u$ in
 $\mathcal{C}^{\infty}(\Omega,\Lambda^{p}T^{*}\Omega)$, it holds
\begin{align}
  \label{eq.commut}
\Delta_{f}^{(p+1)}d_{f}^{(p)}u\ =\ d_{f}^{(p)}\Delta_{f}^{(p)}u\ \ 
\text{and}\ \ 
 \Delta_{f}^{(p-1)}d_{f}^{(p-1),*}u\ =\ d_{f}^{(p-1),*}\Delta_{f}^{(p)}u\,.
\end{align}
The Witten Laplacian $\Delta_{f}^{(p)}$
(the superscript ${(p)}$ means that we are considering its action on differential
$p$-forms)
 extends in the distributional sense into an operator acting on the Sobolev space
$\Lambda^{p}H^{2}$ and is nonnegative and self-adjoint on the flat space  $\Lambda^{p}L^{2}=\Lambda^{p}L^{2}(d\mu)$
once endowed with appropriate Dirichlet or Neumann type boundary conditions
(see indeed \cite{HelNi1,Lep} and Section~\ref{se.Wittenavecbord}).
These self-adjoint extensions are respectively denoted
by  $\Delta_{f}^{\mathbf t,(p)}$ and $\Delta_{f}^{\mathbf n,(p)}$, their respective domains being
given by
\begin{equation}
\label{eq.domain-Delta-t}
D(\Delta_{f}^{\mathbf t,(p)})\ =\  \left\{\omega\in \Lambda^{p}H^{2}\,,\ 
  \mathbf{t}\omega=0\quad \text{and}\quad   \mathbf{t}d^{*}_{f}\omega=0\quad\text{on}\quad\partial\Omega \right\}
\end{equation}
and
\begin{equation}
\label{eq.domain-Delta-n}
D(\Delta_{f}^{\mathbf n,(p)})\ =\  \left\{\omega\in \Lambda^{p}H^{2}\,,\ 
  \mathbf{n}\omega=0\quad\text{and}\quad  \mathbf{n}d_{f}\omega=0\quad\text{on}\quad\partial\Omega \right\}\,.
\end{equation}
In the above two formulas, $\mathbf n\eta$ and $\mathbf t\eta$ stand
respectively
 for the normal and tangential components of the form $\eta$, see
 \eqref{eq.def-tan} and \eqref{eq.def-nor} in the following section  for a precise
definition. 
For $\mathbf b\in\{\mathbf t,\mathbf n\}$,
the quadratic form 
associated with
$\Delta_{f}^{\mathbf b,(p)}$ 
is denoted by $\mathcal D_{f}^{\mathbf b,(p)}$.
Its domain is given by
\begin{equation}
\label{eq.domain-Delta-Q}
\Lambda^{p}H^{1}_{\mathbf b} \ :=\  
\left\{\omega\in \Lambda^{p}H^{1}\,,\ 
  \mathbf{b}\omega=0\quad\text{on}\quad\partial\Omega \right\}
\,,
\end{equation}
and we have,  for every $\omega\in\Lambda^{p}H^{1}_{\mathbf b}$,
\begin{equation}
\label{eq.Dfh}
\mathcal D_{f}^{\mathbf b,(p)}(\omega) \ :=\  
\mathcal D_{f}^{\mathbf b,(p)}(\omega,\omega)
\  =\  \langle d_{f}\omega,d_{f}\omega\rangle_{ L^{2}}+
\langle d^{*}_{f}\omega,d^{*}_{f}\omega\rangle_{ L^{2}}.
\end{equation}
More details about these self-adjoint realizations are given  in Section~\ref{se.Wittenavecbord}.\\

\noindent The  different Brascamp-Lieb's type inequalities stated
in this work arise  from the   
following integration by parts formulas
relating the quadratic forms $\mathcal D_{f}^{\mathbf t,(p)}$ and $\mathcal D_{f}^{\mathbf n,(p)}$
with the geometry of $\Omega$.
In order to lighten this presentation, 
some notations involved in these formulas will only be precisely defined in the next section:
$\partial_{n}f$ denotes the normal derivative of $f$ along the boundary (see \eqref{eq.def-normal-der}), 
${\rm{Ric}}^{(p)}$ and $ \Hess^{\!(p)}\!f $ respectively denote   the smooth
 bundle symmetric endormorphism
of $\Lambda^{p}T^{*}\Omega$ 
defined from the Weitzenb\"ock formula in
 \eqref{eq.Ric-p} and the one
 canonically 
associated with $\Hess f$ (see \eqref{eq.Hess-p}),
 and, for $\mathbf b \in \{\mathbf n,\mathbf t\}$,  
$\mathcal K_{\mathbf b}^{(p)} \in \mathcal L(\Lambda^{p}T^{*}\Omega\big|_{\partial\Omega})$
is defined by means of the second fundamental form of $\partial\Omega$ in
\eqref{eq.Kn1}--\eqref{eq.Ktp}.

\begin{theorem}
\label{th.Witten-Bochner}
Let $\omega\in \Lambda^{p}H_{\mathbf{b}}^{1}$
 with $\mathbf b \in \{\mathbf n,\mathbf t\}$ and $p\in\{0,\dots,n\}$.
It holds
\begin{multline}
  \label{eq.Witten-Bochner}
\mathcal D^{\mathbf b,(p)}_{f}(\omega)\ =\ 
 \| e^{f }\omega\|^{2}_{\dot H^{1}(e^{-2f}d\mu)}
+\langle \left({\rm{Ric}}^{(p)}+2\,\Hess^{\!(p)}\!f\right)\omega ,\omega \rangle_{L^{2}}\\
+ \int_{\partial\Omega} \langle \mathcal K_{\mathbf b}^{(p)}\omega ,\omega \rangle_{\Lambda^{p}}~d\mu_{\partial\Omega}
-2\,\mathbf1_{\mathbf t}(\mathbf b)\,\int_{\partial \Omega}\langle\omega,\omega
\rangle_{\Lambda^{p}}\,\partial_{n}f~d \mu_{\partial \Omega}\,,
\end{multline}
where $\mathbf1_{\mathbf t}(\mathbf b)=1$ if $\mathbf b=\mathbf t$ and $0$ if $\mathbf b=\mathbf n$,
and 
$$\| \cdot\|^{2}_{\dot H^{1}(e^{-2f}d\mu)}\ :=\ 
\| \cdot\|^{2}_{H^{1}(e^{-2f}d\mu)}-\| \cdot\|^{2}_{L^{2}(e^{-2f}d\mu)}\,.$$
\end{theorem}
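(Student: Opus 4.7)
The plan is to establish \eqref{eq.Witten-Bochner} by expanding the distorted operators and reducing to the classical Bochner--Weitzenb\"ock formula with boundary. From \eqref{eq.d-dstar-f-h} I read $d_f = d + df\wedge\cdot$ and $d_f^* = d^* + i_{\nabla f}$; squaring in \eqref{eq.Dfh} and using the pointwise Clifford relation $i_{\nabla f}(df\wedge\cdot) + df\wedge\, i_{\nabla f} = |\nabla f|^2\,\Id$ to collapse the quadratic terms in $df$, I obtain
$$\mathcal D_f^{\mathbf b,(p)}(\omega) = \|d\omega\|^2_{L^2} + \|d^*\omega\|^2_{L^2} + 2\langle d\omega, df\wedge\omega\rangle_{L^2} + 2\langle d^*\omega, i_{\nabla f}\omega\rangle_{L^2} + \int_\Omega|\nabla f|^2|\omega|^2\,d\mu.$$

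For the leading $f$-free part, I would invoke the classical Bochner--Weitzenb\"ock identity with boundary,
$$\|d\omega\|^2_{L^2} + \|d^*\omega\|^2_{L^2} = \|\nabla\omega\|^2_{L^2} + \langle {\rm Ric}^{(p)}\omega,\omega\rangle_{L^2} + \int_{\partial\Omega}\langle \mathcal K_{\mathbf b}^{(p)}\omega,\omega\rangle_{\Lambda^p}\,d\mu_{\partial\Omega},$$
valid for $\omega\in\Lambda^p H^1_{\mathbf b}$ with $\mathbf b\in\{\mathbf n,\mathbf t\}$. This is the standard consequence of the pointwise Weitzenb\"ock formula $\Delta^{(p)} = \nabla^*\nabla + {\rm Ric}^{(p)}$ combined with Green's formula, the boundary integral collecting the second-fundamental-form contribution packaged into $\mathcal K_{\mathbf b}^{(p)}$.

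The technical heart is the treatment of the cross terms $2\langle d\omega, df\wedge\omega\rangle_{L^2} + 2\langle d^*\omega, i_{\nabla f}\omega\rangle_{L^2}$. Using the pointwise adjointness of $df\wedge\cdot$ and $i_{\nabla f}$, Cartan's formula $\mathcal L_{\nabla f} = d\,i_{\nabla f} + i_{\nabla f}\,d$, and Green's identity for both $d$ and $d^*$, these two terms rewrite as a bulk contribution $2\langle\Hess^{(p)}f\cdot\omega,\omega\rangle_{L^2} - \int_\Omega|\omega|^2\Delta f\,d\mu$ plus a boundary integral $c_{\mathbf b}\int_{\partial\Omega}|\omega|^2\partial_n f\,d\mu_{\partial\Omega}$. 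The constant $c_{\mathbf b}$ is determined by which of the two boundary integrals produced by the integrations by parts survives the relevant condition $\mathbf t\omega = 0$ or $\mathbf n\omega = 0$; a careful case analysis yields $c_{\mathbf n} = 1$ and $c_{\mathbf t} = -1$.

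Finally, from $\nabla(e^f\omega) = e^f(\nabla\omega + df\otimes\omega)$ and the divergence theorem,
$$\|e^f\omega\|^2_{\dot H^1(e^{-2f}d\mu)} = \|\nabla\omega\|^2_{L^2} + \int_\Omega|\nabla f|^2|\omega|^2\,d\mu - \int_\Omega|\omega|^2\Delta f\,d\mu + \int_{\partial\Omega}|\omega|^2\partial_n f\,d\mu_{\partial\Omega}.$$
Assembling the three preceding steps and substituting this identity, the bulk $\int_\Omega|\omega|^2\Delta f$ terms cancel, and the net coefficient in front of $\int_{\partial\Omega}|\omega|^2\partial_n f\,d\mu_{\partial\Omega}$ becomes $c_{\mathbf b} - 1$, which equals $0$ for $\mathbf b = \mathbf n$ and $-2$ for $\mathbf b = \mathbf t$: exactly the asymmetric coefficient $-2\,\mathbf 1_{\mathbf t}(\mathbf b)$ appearing in \eqref{eq.Witten-Bochner}. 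The main obstacle is the cross-term step: keeping exact track of which boundary integrals survive under $\mathbf t\omega = 0$ or $\mathbf n\omega = 0$, so that the asymmetric boundary coefficient emerges with the correct sign from the interplay between $d$, $d^*$, and the two boundary conditions.
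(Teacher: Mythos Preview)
Your argument is correct and follows essentially the same route as the paper's: the paper first establishes the $f=0$ case (your Bochner--Weitzenb\"ock identity with boundary, stated there as Theorem~\ref{th.H1}), then combines it with the cross-term identity (packaged there as Lemma~\ref{le.intbypart} together with the zeroth-order relation $\mathcal L_{\nabla f}+\mathcal L^{*}_{\nabla f}=2\,\Hess^{(p)}f+\Delta f$) and with the weighted-norm identity \eqref{eq.id-to-show}, exactly as you do. Just be careful that in the paper's conventions $\Delta=d^{*}d+dd^{*}$ is nonnegative, so your intermediate term should read $+\int_\Omega|\omega|^2\Delta f\,d\mu$ rather than $-\int_\Omega|\omega|^2\Delta f\,d\mu$; since this sign appears consistently in both your steps it cancels and the final formula is unaffected.
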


\ \ 

\noindent When $f=0$, we recover
Theorems~2.1.5 and~2.1.7 of \cite{Sch}
which were generalizing  results in the boundaryless case 
due to Bochner 
for $p=1$ and to Gallot and Meyer for general $p$'s (see \cite{Boc,GaMe}).
These results allow in particular to draw topological 
conclusions on the cohomology of $\Omega$  from its geometry. 
When the boundary $\partial\Omega$ is not empty,  
 the relative and absolute
cohomologies of $\Omega$ (corresponding respectively
to the Dirichlet and Neumann boundary conditions) have to be considered
(see \cite[Section~2.6]{Sch}). To be more precise,
note from Theorem~\ref{th.Witten-Bochner} that
for any $p\in\{0,\dots,n\}$, 
the (everywhere) positivity of the quadratic form ${\rm{Ric}}^{(p)}+2\,\Hess^{\!(p)}\!f$
together with the nonnegativity of $\mathcal K_{\mathbf n}^{(p)}$
(resp. of $\mathcal K_{\mathbf t}^{(p)}-2\,\partial_{n}f$)
implies the lower bounds (in the sense of quadratic forms)
$$\Delta_{f}^{\mathbf b,(p)}\ \geq\  {\rm{Ric}}^{(p)}+2\,\Hess^{\!(p)}\!f\ >\ 0\quad\text{(\,$\mathbf b\in\{\mathbf t,\mathbf n\}$\,)}$$
 for  the Witten Laplacian
and hence the triviality of its kernel
which is isomorphic to the $p$-th absolute (resp. relative) cohomology group of $\Omega$
when  $f=0$.

\subsection{Consequences: Brascamp-Lieb's type inequalities}

\noindent We now define $V:=2f$, 
the 
probability measure $\nu$ associated with $V$ by
$$d\nu\ :=\ 
\frac{e^{-V }}{\int_{\Omega} e^{-V }d\mu}d\mu
\ =\ \frac{e^{-2f}}{\|e^{-f }\|^{2}_{L^{2}}}d\mu\,,
$$
and the weighted Laplacian  acting on $p$-forms $L^{(p)}_{V}$ by
\begin{equation}
\label{eq.LVh-p}
L^{(p)}_{V}\ :=\ e^{f}\,\Delta^{(p)}_{f}\,e^{-f}\,.
\end{equation}
The latter operator acting on the weighted space
$\Lambda^{p }L^{2}(e^{-V}d\mu)$ is then unitarily equivalent to $\Delta^{(p)}_{f}$
(acting on the flat space)
and
we denote by $L^{\mathbf t,(p)}_{V}$ and $L^{\mathbf n,(p)}_{V}$ the nonnegative
self-adjoint unbounded operators on $\Lambda^{p }L^{2}(e^{-V}d\mu)$
associated with $\Delta_{f}^{\mathbf t,(p)}$ and $\Delta_{f}^{\mathbf n,(p)}$
via \eqref{eq.LVh-p}. Their respective domains are easily deduced from 
\eqref{eq.domain-Delta-t}, \eqref{eq.domain-Delta-n}, and \eqref{eq.LVh-p}.\\

\noindent
We denote moreover, for $p\in\{0,\dots,n\}$, by
 $\Lambda^{p}L^{2}(d\nu)$, $\Lambda^{p}H^{1}(d\nu)$,
 $\langle\cdot,\cdot \rangle_{L^{2}(d\nu)}$ and $\|\cdot\|_{L^{2}(d\nu)}$  the weighted
 Lebesgue and Sobolev spaces, $L^{2}$ scalar product and $L^{2}$ norm. 
 We also denote by $\Lambda^{p}H_{\mathbf b}^{1}(d\nu)$
 the set of the $\omega\in \Lambda^{p}H^{1}(d\nu)$ such that $\mathbf b \omega=0$
 on $\partial\Omega$, which is the domain of the quadratic form 
 associated with $L^{\mathbf b,(p)}_{V}$
 according to \eqref{eq.domain-Delta-Q} and $\eqref{eq.LVh-p}$. Since we are working on a compact
 manifold, note  that
 $\Lambda^{p}H_{\mathbf b}^{1}(d\nu)$ is  nothing
 but $\Lambda^{p}H_{\mathbf b}^{1}$ (algebraically and topologically).\\

 \noindent
Playing  with the supersymmetry,
  we easily get from Theorem~\ref{th.Witten-Bochner} the following  
Brascamp-Lieb's type inequalities for differential forms,
where for any $\mathbf b\in\{\mathbf n,\mathbf t\}$ and $p\in\{0,\dots,n\}$,
 $\pi_{\mathbf b}=\pi^{(p)}_{\mathbf b}$ denotes
the orthogonal projection on $\Ker(L^{\mathbf b,(p)}_{V})$.

\begin{theorem}[Brascamp-Lieb's inequalities for differential forms]
\label{th.main}~\
\begin{enumerate}
\item Let $p\in\{0,\dots,n\}$ and let us assume that $\mathcal K_{\textbf n}^{(p)}\geq 0$ everywhere
on $\partial\Omega$ and that
${\rm{Ric}}^{(p)}_{V}:={\rm{Ric}}^{(p)}+\Hess^{\!(p)} V>0$ everywhere on $\Omega$
 (in the sense of quadratic forms). 
It then holds:
\begin{itemize}
\item[i)] if $p>0$, we have for every $\omega\in \Lambda^{p-1}H_{\mathbf n}^{1}(d\nu)$
such that $d^{*}_{V}\omega =0$:
\begin{equation*}
\left\| \omega - \pi_{\mathbf n} \omega \right\|_{L^{2}(d\nu)}^{2} \ \leq\ 
\int_{\Omega} \big\langle \big({\rm{Ric}}^{(p)}_{V}\big)^{-1}d\omega\,,\,d \omega\big\rangle_{\Lambda^{p}}~d\nu\,,
\end{equation*}
\item[ii)] if $p< n$, we have for every $\omega\in \Lambda^{p+1}H_{\mathbf n}^{1}(d\nu)$
such that $d\omega=0$:
\begin{equation*}
\left\| \omega - \pi_{\mathbf n} \omega \right\|_{L^{2}(d\nu)}^{2} \ \leq\ 
\int_{\Omega} \big\langle \big({\rm{Ric}}^{(p)}_{V}\big)^{-1}d_{V}^{*}\omega\,,\,d_{V}^{*} \omega\big\rangle_{\Lambda^{p}}~d\nu\,.
\end{equation*}
\end{itemize}
\item Assume similarly that $\mathcal K_{\textbf t}^{(p)}-\partial_{n}V\geq 0$ everywhere on $\partial\Omega$ and that
${\rm{Ric}}^{(p)}_{V}>0$ everywhere on $\Omega$. It then holds:
\begin{itemize}
\item[i)] if $p>0$, we have for every $\omega\in \Lambda^{p-1}H_{\mathbf t}^{1}(d\nu)$
such that $d^{*}_{V}\omega =0$:
\begin{equation*}
\left\| \omega - \pi_{\mathbf t} \omega \right\|_{L^{2}(d\nu)}^{2} \ \leq\ 
\int_{\Omega} \big\langle \big({\rm{Ric}}^{(p)}_{V}\big)^{-1}d\omega\,,\,d \omega\big\rangle_{\Lambda^{p}}~d\nu\,,
\end{equation*}
\item[ii)]  if $p<n$, we have for every $\omega\in \Lambda^{p+1}H_{\mathbf t}^{1}(d\nu)$
such that $d\omega=0$:
\begin{equation*}
\left\| \omega - \pi_{\mathbf t} \omega \right\|_{L^{2}(d\nu)}^{2} \ \leq\ 
\int_{\Omega} \big\langle \big({\rm{Ric}}^{(p)}_{V}\big)^{-1}d_{V}^{*}\omega\,,\,d_{V}^{*} \omega\big\rangle_{\Lambda^{p}}~d\nu\,.
\end{equation*}
\end{itemize}
\end{enumerate}
\end{theorem}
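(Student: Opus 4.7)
I focus on case (1.ii); the other three cases follow by the same template. The strategy is to combine the supersymmetric intertwining of the Witten Laplacian with the decomposition formula of Theorem~\ref{th.Witten-Bochner}. Denote by $\mathcal D^{\mathbf b,(p)}_{V}(\eta):=\|d\eta\|_{L^{2}(d\nu)}^{2}+\|d_{V}^{*}\eta\|_{L^{2}(d\nu)}^{2}$ the quadratic form associated with $L_{V}^{\mathbf b,(p)}$ on $\Lambda^{p}H^{1}_{\mathbf b}(d\nu)$. Via the unitary equivalence \eqref{eq.LVh-p} between $\Delta_{f}^{\mathbf b,(p)}$ and $L_{V}^{\mathbf b,(p)}$, applying Theorem~\ref{th.Witten-Bochner} to $e^{-f}\eta\in\Lambda^{p}H^{1}_{\mathbf b}$ and normalizing by $Z:=\int_{\Omega}e^{-V}d\mu$ expresses $\mathcal D_{V}^{\mathbf b,(p)}(\eta)$ as the sum of a nonnegative $\dot H^{1}$-type term, the quadratic form $\int_{\Omega}\langle{\rm Ric}_{V}^{(p)}\eta,\eta\rangle_{\Lambda^{p}}d\nu$, and boundary contributions which the hypotheses of Theorem~\ref{th.main} force to be nonnegative (in part~2, the extra term $-2\partial_{n}f=-\partial_{n}V$ in \eqref{eq.Witten-Bochner} is precisely absorbed by $\mathcal K_{\mathbf t}^{(p)}-\partial_{n}V\geq 0$). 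This yields the basic estimate
$$
(\star)\qquad\mathcal D_{V}^{\mathbf b,(p)}(\eta)\ \geq\ \int_{\Omega}\langle{\rm Ric}_{V}^{(p)}\eta,\eta\rangle_{\Lambda^{p}}\,d\nu\qquad\text{for every }\eta\in\Lambda^{p}H^{1}_{\mathbf b}(d\nu).
$$

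Given $\omega\in\Lambda^{p+1}H^{1}_{\mathbf n}(d\nu)$ with $d\omega=0$, set $u:=\omega-\pi_{\mathbf n}\omega$. The next step is to realize $u$ as $d\eta$ for a suitable auxiliary $p$-form $\eta$. First, $d_{V}^{*}\omega\in\Lambda^{p}L^{2}(d\nu)$ is orthogonal to $\ker L_{V}^{\mathbf n,(p)}$: any $\alpha$ in that kernel satisfies $\mathbf n\alpha=0$ and $d\alpha=0$, so an integration by parts -- whose boundary term vanishes thanks to $\mathbf n\omega=0$ -- gives $\langle d_{V}^{*}\omega,\alpha\rangle_{L^{2}(d\nu)}=\langle\omega,d\alpha\rangle_{L^{2}(d\nu)}=0$. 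The Fredholm alternative applied to the nonnegative self-adjoint operator $L_{V}^{\mathbf n,(p)}$ (which has compact resolvent since $\Omega$ is compact) then produces a unique $\eta\in D(L_{V}^{\mathbf n,(p)})\cap(\ker L_{V}^{\mathbf n,(p)})^{\perp}$ solving $L_{V}^{\mathbf n,(p)}\eta=d_{V}^{*}\omega$. The supersymmetric intertwining inherited from \eqref{eq.commut} together with $d\omega=0$ yields $L_{V}^{(p+1)}(d\eta)=d\,L_{V}^{(p)}\eta=dd_{V}^{*}\omega=L_{V}^{(p+1)}u$, while the boundary conditions $\mathbf n d\eta=0$ (from $\eta\in D(L_{V}^{\mathbf n,(p)})$) and $\mathbf n u=0$ identify $d\eta-u$ as an element of $\ker L_{V}^{\mathbf n,(p+1)}$, hence orthogonal in $L^{2}(d\nu)$ to both $u$ and $\pi_{\mathbf n}\omega$. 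Integration by parts consequently gives
$$
\mathcal D_{V}^{\mathbf n,(p)}(\eta)\ =\ \langle d_{V}^{*}\omega,\eta\rangle_{L^{2}(d\nu)}\ =\ \langle\omega,d\eta\rangle_{L^{2}(d\nu)}\ =\ \|u\|_{L^{2}(d\nu)}^{2}.
$$

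Finally, the weighted Cauchy--Schwarz inequality with the pointwise-invertible weight ${\rm Ric}_{V}^{(p)}$ yields
$$
\|u\|_{L^{2}(d\nu)}^{4}\ =\ |\langle d_{V}^{*}\omega,\eta\rangle_{L^{2}(d\nu)}|^{2}\ \leq\ \Bigl(\int_{\Omega}\langle({\rm Ric}_{V}^{(p)})^{-1}d_{V}^{*}\omega,d_{V}^{*}\omega\rangle\,d\nu\Bigr)\cdot\Bigl(\int_{\Omega}\langle{\rm Ric}_{V}^{(p)}\eta,\eta\rangle\,d\nu\Bigr),
$$
and bounding the last factor via $(\star)$ by $\mathcal D_{V}^{\mathbf n,(p)}(\eta)=\|u\|_{L^{2}(d\nu)}^{2}$ gives the desired inequality after dividing by $\|u\|^{2}$ (the case $u=0$ being trivial). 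The most delicate technical point is the identification of $d\eta$ with $u$ modulo $\ker L_{V}^{\mathbf n,(p+1)}$, which requires carefully propagating the Neumann boundary conditions from level $p$ to level $p+1$ through the supersymmetric relations \eqref{eq.commut} and invoking elliptic regularity for $L_{V}^{\mathbf n,(\cdot)}$ to legitimize the distributional identities at the required regularity. The remaining three cases follow by the same scheme: in (1.i), solve $L_{V}^{\mathbf n,(p)}\eta=d\omega$ and realize $u$ as $d_{V}^{*}\eta$; in part~2, replace the Neumann realization by the Dirichlet one throughout, the required nonnegativity of the boundary contributions being exactly ensured by the hypothesis $\mathcal K_{\mathbf t}^{(p)}-\partial_{n}V\geq 0$.
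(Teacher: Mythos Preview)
Your approach is essentially the paper's, built on the same two ingredients: the quadratic-form lower bound $(\star)$ coming from Theorem~\ref{th.Witten-Bochner}, and the supersymmetric structure. The paper's presentation is more compact. Working in the flat space with $\eta:=e^{-f}\omega$ (where $f=V/2$), the paper first notes that $(\star)$ already makes $\Delta_f^{\mathbf n,(p)}$ \emph{invertible}, and then invokes directly the Helffer--Sj\"ostrand type identity
\[
\|\eta-\pi_{f,\mathbf n}\eta\|_{L^2}^2\ =\ \big\langle\big(\Delta_f^{\mathbf n,(p)}\big)^{-1}d_f^*\eta,\,d_f^*\eta\big\rangle_{L^2}
\]
(this is \eqref{eq.HS} with $d_f$ replaced by $d_f^*$, obtained from \eqref{eq.ortho-decomp''}--\eqref{eq.ortho-decomp'''} and \eqref{Green-distorted}), followed by the operator inequality $A\geq B>0\Rightarrow A^{-1}\leq B^{-1}$. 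Your auxiliary $p$-form is precisely $(L_V^{\mathbf n,(p)})^{-1}d_V^*\omega$, and your chain ``$\mathcal D_V^{\mathbf n,(p)}(\eta)=\|u\|^2$ then weighted Cauchy--Schwarz with $(\star)$'' is exactly an unpacking of that operator inequality. In particular the ``delicate technical point'' you flag---showing $d\eta-u\in\ker L_V^{\mathbf n,(p+1)}$ with the correct boundary behaviour---is absorbed in the paper's use of Proposition~\ref{pr.ortho-decomp}, which packages the needed commutation and regularity once and for all.

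One small slip worth fixing: your claim that $d\eta-u$, being in $\ker L_V^{\mathbf n,(p+1)}$, is therefore orthogonal to $\pi_{\mathbf n}\omega$ is incorrect, since $\pi_{\mathbf n}\omega$ lies in the \emph{same} kernel. The correct (and simpler) observation is that $d\eta$ itself is orthogonal to $\pi_{\mathbf n}\omega$, because $d\eta\in\Ran\big(d\big|_{\Lambda^{p}H^1_{\mathbf n}}\big)\perp\ker L_V^{\mathbf n,(p+1)}$ by the Hodge decomposition \eqref{eq.ortho-decomp}; with this your identity $\langle\omega,d\eta\rangle_{L^2(d\nu)}=\|u\|_{L^2(d\nu)}^2$ goes through.
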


\noindent
In the case $p=1$, the points 1.i) and 2.i) of Theorem~\ref{th.main}
take a simpler form. Every $\omega\in \Lambda^{0}H^{1}(d\nu)$
satisfies indeed $d^{*}_{V}\omega=0$. Moreover, 
we have simply 
$$\Lambda^{0}H^{1}_{\mathbf n}(d\nu)\ =\ H^{1}(d\nu)\quad\text{and}\quad
\Ker(L^{\mathbf n,(p)}_{V})\ =\  \sspan\{1\}$$
as well as
 $$\Lambda^{0}H^{1}_{\mathbf t}(d\nu)=H_{0}^{1}(d\nu)
 \quad\text{and}\quad
\Ker(L^{\mathbf t,(p)}_{V})\ =\ \{0\}\,.$$
Defining the mean of $u\in L^{2}(d\nu)$
by
 $\langle u\rangle_{\nu}:=\langle u, 1\rangle_{L^{2}(d\nu)}$, 
 we then immediately get from Theorem~\ref{th.main}
 (together with \eqref{eq.Kn1} and \eqref{eq.Kt1})
 the following (where $\mathcal K_{1}$ denotes the shape operator defined
 in the next section, in \eqref{eq.shape})

\begin{corollary}
\label{co.BL}
\begin{itemize}
\item[i)] Assume that the shape operator $\mathcal K_{1}$ is nonpositive everywhere on $\partial\Omega$
and that ${\rm{Ric}}+\Hess V>0$ everywhere on $\Omega$. It then holds: for every  $\omega\in H^{1}(d\nu)$,
\begin{equation}
  \label{eq.BL1}
\left\| \omega - \langle \omega\rangle_{\nu} \right\|_{L^{2}(d\nu)}^{2} \ \leq\ 
\int_{\Omega} \big({\rm{Ric}}+\Hess V\big)^{-1}(\nabla \omega,\nabla \omega)~d\nu\,.
\end{equation}
\item[ii)] Assume similarly
 that  $-\Tr(\mathcal K_{1})-\partial_{n}V\geq 0$ everywhere
on $\partial\Omega$ and that ${\rm{Ric}}+\Hess V>0$ everywhere on $\Omega$. It then holds:
 for every  $\omega\in H_{0}^{1}(d\nu)$,
\begin{equation}
  \label{eq.BL2}
\left\| \omega \right\|_{L^{2}(d\nu)}^{2}\ \leq\  
\int_{\Omega} \big({\rm{Ric}}+\Hess V\big)^{-1}(\nabla \omega,\nabla \omega)~d\nu\,.
\end{equation}
\end{itemize}
\end{corollary}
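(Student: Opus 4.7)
The plan is to specialise Theorem~\ref{th.main} to $p=1$: case~1.i) gives the Neumann statement~(i) of the corollary, while case~2.i) gives the Dirichlet statement~(ii). The argument amounts to unpacking, in the degree $p-1=0$, each ingredient of the general formula into its familiar geometric counterpart.

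First I would identify the right-hand side. Given a function $\omega \in H^1(d\nu)$, viewed as a $0$-form, the condition $d_V^* \omega = 0$ appearing in 1.i) and 2.i) is automatic, since $d^*$, and hence $d_V^*$, is zero on $0$-forms. Under the Riemannian identification of $1$-forms with vector fields, $d\omega$ corresponds to $\nabla \omega$, while by the Weitzenb\"ock formula $\mathrm{Ric}^{(1)}$ acts on $1$-forms as the Ricci tensor acts on vector fields, and $\mathrm{Hess}^{(1)} V$ is the canonical lift of $\mathrm{Hess}\,V$. Consequently,
\begin{equation*}
\big\langle \big(\mathrm{Ric}^{(1)}_{V}\big)^{-1} d\omega,d\omega\big\rangle_{\Lambda^{1}}
\ =\ \big(\mathrm{Ric}+\mathrm{Hess}\,V\big)^{-1}(\nabla\omega,\nabla\omega),
\end{equation*}
matching the right-hand sides of \eqref{eq.BL1} and \eqref{eq.BL2}.

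Next I would identify the left-hand side. For Neumann boundary conditions on functions one has $\Lambda^{0}H^{1}_{\mathbf n}(d\nu)=H^{1}(d\nu)$ (no boundary restriction on the tangential part of a $0$-form) and $\mathrm{Ker}(L^{\mathbf n,(0)}_{V})=\mathrm{span}\{1\}$, so the orthogonal projection is $\pi_{\mathbf n}\omega = \langle \omega\rangle_{\nu}$, reproducing the variance on the left of \eqref{eq.BL1}. For Dirichlet boundary conditions, $\Lambda^{0}H^{1}_{\mathbf t}(d\nu)=H^{1}_{0}(d\nu)$ and $\mathrm{Ker}(L^{\mathbf t,(0)}_{V})=\{0\}$ (by uniqueness of the harmonic extension together with the Dirichlet condition), hence $\pi_{\mathbf t}\omega = 0$, which gives the left-hand side of \eqref{eq.BL2}.

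Finally I would match the boundary hypotheses. The operator $\mathcal K^{(1)}_{\mathbf n}$ on $1$-forms is, by the definition \eqref{eq.Kn1} recalled in the next section, proportional (with opposite sign) to the shape operator $\mathcal K_{1}$ of $\partial\Omega$; therefore $\mathcal K^{(1)}_{\mathbf n}\geq 0$ is exactly $\mathcal K_{1}\leq 0$, which is the assumption in (i). Similarly $\mathcal K^{(1)}_{\mathbf t}$ on $1$-forms equals $-\mathrm{Tr}(\mathcal K_{1})$ by \eqref{eq.Kt1}, so $\mathcal K^{(1)}_{\mathbf t}-\partial_{n}V\geq 0$ coincides with the hypothesis $-\mathrm{Tr}(\mathcal K_{1})-\partial_{n}V\geq 0$ in (ii). With these translations in place, Theorem~\ref{th.main}.1.i) and Theorem~\ref{th.main}.2.i) respectively yield \eqref{eq.BL1} and \eqref{eq.BL2}. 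The only mildly delicate point is the bookkeeping for $\mathcal K^{(1)}_{\mathbf b}$: one must verify the sign conventions in the definitions \eqref{eq.Kn1} and \eqref{eq.Kt1}, but since these are formulas stated elsewhere in the paper, for the corollary they are simply invoked.
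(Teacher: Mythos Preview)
Your proposal is correct and follows exactly the approach the paper uses: the corollary is derived by specialising Theorem~\ref{th.main} (cases 1.i) and 2.i)) to $p=1$, using that $d_V^*$ vanishes on $0$-forms, identifying the kernels of $L_V^{\mathbf n,(0)}$ and $L_V^{\mathbf t,(0)}$, and reading off the boundary conditions from \eqref{eq.Kn1} and \eqref{eq.Kt1}. One very minor slip: in your second paragraph you write ``no boundary restriction on the tangential part of a $0$-form'' where you mean the \emph{normal} part (indeed $\mathbf n\omega=\vec n^{\flat}\wedge\mathbf i_{\vec n}\omega=0$ automatically for a function), but the conclusion $\Lambda^{0}H^{1}_{\mathbf n}(d\nu)=H^{1}(d\nu)$ is of course correct.
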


\noindent When $\Omega\setminus\partial\Omega$ appears to be a smooth open subset
of $\mathbf R^{n}$, ${\rm{Ric}}$ and ${\rm{Ric}}^{(p)}$  vanish and the latter corollary as well as 
Theorem~\ref{th.main} then write in a simpler way just
relying on a  control from below of $\Hess V$ or $\Hess^{\!(p)} V$
instead of ${\rm{Ric}}^{(p)}_{V}={\rm{Ric}}^{(p)}+\Hess^{\!(p)} V$.
One recovers in particular the usual Brascamp-Lieb's inequality when 
$\Omega=\mathbf R^{n}$: even if  $\Omega$
has been assumed compact here, we recover 
the estimate \eqref{eq.BL0} 
for a probability measure $d\nu$ on $\mathbf R^{n}$ using  the first  point of Corollary~\ref{co.BL} for the family of  measures
$\left(\frac{1}{\nu(B(0,N))}d\nu\big|_{B(0,N)}\right)_{N\in\mathbf N}$ and letting $N\to+\infty$ since $B(0,N)$ is convex; see also \cite{Joh}.\\

\noindent The above results can be useful for semiclassical
problems involving the low spectrum of semiclassical Witten Laplacians  (or equivalently of semiclassical weighted Laplacians)
in large dimension, such as problems dealing with correlation asymptotics,  under some  suitable (and uniform in the dimension)  estimates on the eigenvalues of $\Hess V$ (and then of $\Hess^{\!(p)} V$) on some parts of $\Omega$. We refer for example to \cite{HeSj,BaJeSj,BaMo,BaMo2} or to the more recent \cite{DiLe}   for some works
exploiting this kind of estimates. Let us recall that 
we consider in this setting, for a small parameter $h>0$, $\frac fh$ and $\frac V h$ instead of $f$ and $V$,
and $h^{2}\Delta_{\frac f h}^{(p)}$ instead of $\Delta_{f }^{(p)}$
for the usual semiclassical Schr\"odinger operator form. 
Note then
from ${\rm{Ric}}^{(p)}_{\frac Vh}=h^{-1}(h\,{\rm{Ric}}^{(p)}+\Hess^{\!(p)} V)$
 that the curvature effects due to ${\rm{Ric}}^{(p)}$ become negligible
at the semiclassical limit $h\to 0^{+}$ under the condition $\Hess^{\!(p)} V>0$
everywhere on $\Omega$. 
To apply  Theorem~\ref{th.main} for any small $h>0$ in the Neumann case
under this condition then only requires 
the additional $h$-independent condition
$\mathcal K_{\textbf n}^{(p)}\geq 0$
 everywhere on $\partial\Omega$.
In the Dirichlet case, the required additional condition  becomes
$h\,\mathcal K_{\textbf t}^{(p)} -\partial_{n}V\geq 0$,
 which requires in particular
 $\partial_{n}V\leq 0$  everywhere on $\partial\Omega$.
The point ii) of Corollary~\ref{co.BL} is thus irrelevant in this  case.\\

\noindent
Let us lastly underline that to prove
Theorem~\ref{th.main} (and then Corollary~\ref{co.BL}),
we only use the supersymmetry structure and the  relation
$$
\Delta_{f}^{\mathbf b,(p)}\ \geq\ {\rm{Ric}}^{(p)}+2\,\Hess^{\!(p)} f>0
$$
implied by Theorem~\ref{th.Witten-Bochner}
 together with the hypotheses of  Theorem~\ref{th.main}.
However,
a control from below of the restriction $
\Delta_{f}^{\mathbf b,(p)}\big|_{\Ran\,d_{f}}
$ for the points 1.i) and 2.i) (resp. of $
\Delta_{f}^{\mathbf b,(p)}\big|_{\Ran\,d^{*}_{f}}
$ for the points 1.ii) and 2.ii))
 would actually be sufficient as it can be seen
 by looking for example  at the further relation
 \eqref{eq.HS} generalizing \eqref{eq.HS0} (see also
 Proposition~\ref{pr.ortho-decomp}
for more details about the latter restrictions). 
  The 
specific form of the nonnegative first term in the r.h.s. of  
  the
integration by parts formula \eqref{eq.Witten-Bochner}
stated  in  Theorem~\ref{th.Witten-Bochner} is moreover not used,
i.e. only its nonnegativity comes into play.
 When
$p=1$, we can easily slightly improve Corollary~\ref{co.BL}
taking advantage of this nonnegative term which allows to compare 
$\Delta_{f}^{\mathbf b,(1)}\big|_{\Ran\,d_{f}}$
(or equivalently $L_{V,h}^{\mathbf b,(1)}\big|_{\Ran\, d}$)
with the so-called $N$-dimensional Bakry-\'Emery tensor
\begin{equation}
\label{eq.BakryEmeryCurvature}
{\rm{Ric}}_{V,N}\ :=\ {\rm Ric}\,+\,\Hess V\,-\,\frac{1}{N-n}\,dV\otimes dV\,,
\end{equation}
where $N\in (-\infty,+\infty]$ and, when $N=n$, ${\rm{Ric}}_{V,n}$ is defined iff $V$
is constant. The hypotheses of Corollary~\ref{co.BL}
require in particular the (everywhere) positivity of ${\rm{Ric}}_{V,+\infty}$
and we have more generally the

\begin{corollary}
\label{co.main} In the following, 
we assume that $N\in(-\infty,0]\cup[n,+\infty]$.
\begin{itemize}
\item[i)]  Assume that $\mathcal K_1\leq 0$ everywhere on $\partial\Omega$
and that ${\rm{Ric}}_{V,N}>0$ everywhere on $\Omega$.
It then holds:
for every  $\omega\in H^{1}(d\nu)$,
\begin{equation*}
\left\| \omega - \langle \omega\rangle_{\nu} \right\|_{L^{2}(d\nu)}^{2} \ \leq\ \frac{N-1}{N}\,
\int_{\Omega} \big({\rm{Ric}}_{V,N}\big)^{-1}(\nabla \omega,\nabla \omega)~d\nu\,.
\end{equation*}
\item[ii)] Assume similarly  that $-\Tr(\mathcal K_{1})-\partial_{n}V\geq 0$
everywhere on $\partial\Omega$ and that ${\rm{Ric}}_{V,N}>0$ on $\Omega$. It then holds:
for every  $\omega\in H_{0}^{1}(d\nu)$,
\begin{equation*}
\left\| \omega \right\|_{L^{2}(d\nu)}^{2}\ \leq\  \frac{N-1}{N}\,
\int_{\Omega} \big({\rm{Ric}}_{V,N}\big)^{-1}(\nabla \omega,\nabla \omega)~d\nu\,.
\end{equation*}
\end{itemize}
\end{corollary}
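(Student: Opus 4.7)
The plan is to refine the proof of Corollary~\ref{co.BL} by retaining the nonnegative gradient-norm term $\|e^f\omega\|^2_{\dot H^1(e^{-2f}d\mu)}$ from Theorem~\ref{th.Witten-Bochner}, which was discarded there, and exploiting it via the classical Bakry--\'Emery CD$(\rho,N)$ trick to gain the factor $\frac{N-1}{N}$. I focus on~i); part~ii) is strictly parallel once one observes that $d\psi$ is purely normal on $\partial\Omega$ when $\mathbf t\psi=0$.

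Given $u\in H^1(d\nu)$, set $v:=u-\langle u\rangle_\nu$. Since $\ker L_V^{\mathbf n,(0)}=\sspan\{1\}$ and $\Omega$ is compact, $L_V^{\mathbf n,(0)}$ is invertible on $\sspan\{1\}^\perp$, so I may define $\psi:=(L_V^{\mathbf n,(0)})^{-1}v$. A standard integration by parts, combined with the supersymmetry $L_V^{(1)}d=dL_V^{(0)}$, yields the Helffer--Sj\"ostrand-type identity
$$
\|v\|_{L^2(d\nu)}^2\ =\ \langle du,d\psi\rangle_{L^2(d\nu)}\ =\ \mathcal D_V^{\mathbf n,(1)}(d\psi),
$$
where $\mathcal D_V^{\mathbf n,(1)}$ denotes the quadratic form of $L_V^{\mathbf n,(1)}$ on $L^2(d\nu)$. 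Transporting Theorem~\ref{th.Witten-Bochner} for $p=1$ to the weighted space via~\eqref{eq.LVh-p}, using $\nabla d\psi=\Hess\psi$ and the fact that the hypothesis $\mathcal K_1\leq 0$ gives $\mathcal K_{\mathbf n}^{(1)}\geq 0$ (so that the boundary term can be dropped), I obtain
$$
\|v\|_{L^2(d\nu)}^2\ \geq\ \int_\Omega\bigl(|\Hess\psi|^2+({\rm Ric}+\Hess V)(d\psi,d\psi)\bigr)d\nu.
$$

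The key new ingredient is the pointwise Bakry--\'Emery CD$(\rho,N)$ inequality
$$
|\Hess\psi|^2+({\rm Ric}+\Hess V)(d\psi,d\psi)\ \geq\ {\rm Ric}_{V,N}(d\psi,d\psi)+\frac{(L_V^{(0)}\psi)^2}{N}\,,
$$
valid for $N\in(-\infty,0]\cup[n,+\infty]$. It follows from the Cauchy--Schwarz bound $|\Hess\psi|^2\geq(\Delta\psi)^2/n$ combined with the algebraic identity
$$
\frac{a^2}{n}+\frac{b^2}{N-n}-\frac{(a+b)^2}{N}\ =\ \frac{\bigl((N-n)a-nb\bigr)^2}{nN(N-n)}\,,
$$
applied to $a=-\Delta\psi$ and $b=\nabla V\cdot\nabla\psi$, so that $a+b=L_V^{(0)}\psi$; the restriction on $N$ is precisely what makes the right-hand side nonnegative. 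Integrating against $d\nu$ and using $L_V^{(0)}\psi=v$ yields $\int_\Omega{\rm Ric}_{V,N}(d\psi,d\psi)\,d\nu\leq\frac{N-1}{N}\|v\|_{L^2(d\nu)}^2$.

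To conclude~i), I apply Cauchy--Schwarz with respect to the positive bundle endomorphism ${\rm Ric}_{V,N}$ to the identity $\|v\|_{L^2(d\nu)}^2=\langle du,d\psi\rangle_{L^2(d\nu)}$ and divide through by $\|v\|_{L^2(d\nu)}^2$ (trivial when it vanishes). Part~ii) runs identically, taking $\psi:=(L_V^{\mathbf t,(0)})^{-1}u$---well defined because $\ker L_V^{\mathbf t,(0)}=\{0\}$---and using that $\mathbf t\psi=0$ forces $\mathbf t d\psi=0$ on $\partial\Omega$, which reduces the boundary contribution of Theorem~\ref{th.Witten-Bochner} to $\int_{\partial\Omega}(-\Tr\mathcal K_1-\partial_n V)|\partial_n\psi|^2\,d\nu_{\partial\Omega}\geq 0$ under the hypothesis. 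The main technical point to monitor is the sign of $nN(N-n)$ in the algebraic identity above, which is precisely what selects the admissible range $N\in(-\infty,0]\cup[n,+\infty]$ and recovers, in the limit $N\to+\infty$, Corollary~\ref{co.BL}.
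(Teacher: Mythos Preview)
Your proof is correct and follows essentially the same route as the paper's: both rely on Theorem~\ref{th.Witten-Bochner} for $p=1$ applied to an exact $1$-form, the same pointwise Bakry--\'Emery step $|\Hess\psi|^{2}\geq(\Delta\psi)^{2}/n$ combined with the algebraic identity in $a,b,N$, and the supersymmetric/Helffer--Sj\"ostrand structure. The only differences are presentational. The paper works in the flat $L^{2}(d\mu)$ picture with $\Delta_{f}$, packages the estimate as the operator inequality $(1-\tfrac1N)\,\Delta_{f}^{\mathbf n,(1)}\big|_{\Ran d_{f}}\geq{\rm Ric}_{V,N}$, and then inverts inside the identity~\eqref{eq.HS'}; you work in the weighted $L^{2}(d\nu)$ picture with $L_{V}$, choose the concrete test function $\psi=(L_{V}^{\mathbf b,(0)})^{-1}v$, and finish by the weighted Cauchy--Schwarz $\langle du,d\psi\rangle\leq\big(\int({\rm Ric}_{V,N})^{-1}(du,du)\,d\nu\big)^{1/2}\big(\int{\rm Ric}_{V,N}(d\psi,d\psi)\,d\nu\big)^{1/2}$. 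These two endings are dual formulations of the same inequality; your version is arguably a touch more elementary since it bypasses the operator-inverse comparison on the subspace $\Ran d_{f}$, while the paper's operator bound is a cleaner standalone statement.
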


\noindent 
Note that $\frac1N$ appears here as a natural parameter and that  $N\in(-\infty,0]\cup[n,+\infty]$ is equivalent to  $\frac{1}{N}\in [-\infty,\frac1n]$ with the convention $\frac{1}{0}=-\infty$.\\

\noindent
This result corresponds to the
 cases (1) and (2) of
 Theorem~1.2 in the recent article \cite{KoMi} to which we also refer
 for more details and references concerning the $N$-dimensional Bakry-\'Emery tensor
and its connections with the Bakry-\'Emery operators $\Gamma$ and $\Gamma_{2}$
 (see \eqref{eq.Gamma} and \eqref{eq.Gamma2} in the following section, and also \cite{BaGeLe}). The authors derive
these formulas from the so-called generalized Reilly formula
stated in Theorem~1.1 there, which somehow generalizes, in the weighted space setting,
 the statement given by Theorem~\ref{th.Witten-Bochner} 
 when  $p=1$ and $\omega$ has the form $d_{f}\eta$,
 to arbitrary $\omega=d_{f}\eta$ which are not assumed tangential nor normal.
 We also mention the related work \cite{KoMi2} of the same authors.\\ 
 
 \noindent
 Note lastly that for $N> n$, Corollary~\ref{co.main}
 does not provide any improvement in comparison with
 Corollary~\ref{co.BL} in the semiclassical setting, that is when $V$ is replaced by 
 $\frac Vh$ where $h\to 0^{+}$,
because of the term $-\frac{1}{(N-n)\,h^{2}}\,dV\otimes dV$
involved in ${\rm{Ric}}_{\frac Vh,N} $ (see indeed \eqref{eq.BakryEmeryCurvature}).

\subsection{Plan of the paper}

In the following section, we recall general definitions and properties related 
to the Riemannian structure and to
the Witten and weighted Laplacians. We then give the basic properties of the self-adjoint realizations
$\Delta_{f}^{\mathbf t,(p)}$ and $\Delta_{f}^{\mathbf n,(p)}$ in Section~\ref{se.Wittenavecbord}.
Lastly, in Section~\ref{se.proofs}, we prove Theorem~\ref{th.Witten-Bochner},
Theorem~\ref{th.main}, and Corollary~\ref{co.main}.

\section{Geometric setting}
\label{se.Geo}

\subsection{General definitions and properties}

 \noindent Let us begin with the notion of local orthonormal frame that will be frequently used in the sequel. 
 A local orthonormal frame on some open set $U\subset \Omega$
 is a
  family $(E_{1},\dots,E_{n})$ of smooth sections of $T\Omega$ defined on $U$
   such that
  $$
  \forall\,i,j\,\in\,\{ 1,\dots,n\}\,,\ \forall x\,\in\, U\ ,\quad \langle \,E_{i},E_{j}\,\rangle_{x}\ =\ \delta_{i,j}\,.
  $$
 According for example to \cite[Definition~1.1.6]{Sch} and to the related remarks,
 it is always possible to cover $\Omega$ with a finite family (since $\Omega$ is compact) of opens sets $U$'s such that
 there exists a local orthonormal frame $(E_{1},\dots,E_{n})$ on each $U$. Such a covering
 is called a nice cover of $\Omega$.\\
 
 \noindent
 The outgoing normal vector field will be denoted by $\vec n$
 and the orientation is chosen such that
 $$
 \mu_{\partial\Omega}\ =\ \mathbf i_{\vec n}\,\mu\,,
 $$
where   $\mathbf{i}$ denotes the interior product.
 Owing to the Collar Theorem stated in \cite[Theorem~1.1.7]{Sch}, the vector field 
 $\vec n
 \in \mathcal C^{\infty}(\partial\Omega,T\Omega\big|_{\partial\Omega})$
 can be extended to a smooth vector field on a neighborhood of the boundary $\partial\Omega$.
 Moreover, taking maybe a finite refinement of  a  nice cover of  $\Omega$ as defined previously,
 one can always assume that the local orthonormal frame $(E_{1},\dots,E_{n})$ corresponding to any of its elements $U$ meeting $\partial\Omega$ is such that $E_{n}\big|_{\partial\Omega}=\vec n$. 
\\

\noindent For any $\omega\in
\Lambda^{p}\mathcal{C}^{\infty}$, the tangential part of $\omega$ on $\partial\Omega$ is the form
 $\mathbf{t}\omega\in\mathcal{C}^{\infty}(\partial \Omega,
\Lambda^{p}T^{*}\Omega\big|_{\partial\Omega})$ defined by:
\begin{equation}
\label{eq.def-tan}
\forall \sigma \in \partial \Omega\ ,\quad 
 (\mathbf{t}\omega)_{\sigma}(X_{1},\ldots,
X_{p})\ :=\ \omega_{\sigma}(X_{1}^{T},\ldots, X_{p}^{T})\,,
\end{equation}
with the decomposition $X_{i}=X_{i}^{T}\oplus x_{i}^{\perp}\vec n_{\sigma}$ into the tangential 
and normal components to $\partial\Omega$ at $\sigma$.
More briefly, it holds $
\mathbf{t}\omega = \mathbf{i}_{\vec n}(\vec n^{\flat}\wedge \omega)$.
The normal part of $\omega$ on $\partial\Omega$ is then defined by:
\begin{equation}
\label{eq.def-nor}
\mathbf{n}\omega\ :=\ \omega|_{\partial\Omega}-\mathbf{t}\omega
\ =\ 
\vec n^{\flat}\wedge(\mathbf{i}_{\vec n} \omega)
 \quad
\in\  \mathcal{C}^{\infty}(\partial \Omega,\Lambda^{p}T^{*}\Omega\big|_{\partial\Omega}).
\end{equation}
Here and in the sequel, the notation  $\flat: X\mapsto X^{\flat}$ stands for the inverse isomorphism
of the canonical isomorphism $\sharp:\xi \mapsto \xi^{\sharp}  $  from
$T^{*}\Omega$ onto $T\Omega$ (defined by the relation $\langle\xi^{\sharp},X\rangle:= \xi(X)$ for every $X\in T\Omega$).\\

\noindent
For a (real) smooth function $f$ and a smooth vector field $X$,
we will use the notation
$$
\nabla_{X}f\ :=\ X\cdot f\ =\ df(X)\,,
$$  
the normal derivative of $f$ along the boundary being in particular defined
by
\begin{equation}
\label{eq.def-normal-der}
\partial_{n}f\ : =\ 
\left\langle\, \nabla f,\vec n\,\right\rangle\ =\ \nabla_{\vec n}f\,.
\end{equation}
We will also denote by $\nabla:\mathcal C^{\infty}(\Omega,T\Omega)\times
\mathcal C^{\infty}(\Omega,T\Omega)\to \mathcal C^{\infty}(\Omega,T\Omega)$ the Levi-Civita connection on $\Omega$ and by $\nabla_{X}(\cdot)$  the covariant derivative
 (in the direction of $X$) of vector fields as well as the induced
 covariant derivative
on $\Lambda^{p}T^{*}\Omega$. 
The second covariant derivative
 is then the bilinear mapping
on $T\Omega$ 
 defined, for $X,Y\in\mathcal C^{\infty}(\Omega,T\Omega)$ by
 $$
 \nabla^{2}_{X,Y}\ :=\ \nabla_{X}\nabla_{Y}-\nabla_{\nabla_{X}Y}\,.
 $$
When $f$ is a smooth function, 
$
\nabla^{2}_{X,Y} \,f
$
is simply the Hessian of $f$. It is in this case a symmetric bilinear form
and has the simpler writing
\begin{equation}
\label{eq.de-Hess}
\Hess f(X,Y)\ :=\ \nabla^{2}_{X,Y} \,f\ =\ (\nabla_{X} \,df)(Y)\ =\ \langle \nabla_{X}\nabla f,Y \rangle\,.
\end{equation}
The Bochner Laplacian $\Delta_{B}$
on $\mathcal{C}^{\infty}(\Omega, \Lambda
T^{*}\Omega)$
is defined as minus the trace of the bilinear mapping $(X,Y)\mapsto \nabla^{2}_{X,Y}$.
For any local orthonormal frame $(E_{1},\dots,E_{n})$ on $U\subset \Omega$,
$\Delta_{B}$ is in particular given on $U$ by
\begin{equation}
\label{eq.Bochner}
\Delta_{B}\ =\ -\sum_{i=1}^{n}\big(\nabla_{E_{i}}\nabla_{E_{i}}-\nabla_{\nabla_{E_{i}}E_{i}}\big)\,.
\end{equation}

\noindent
The Hodge and Bochner Laplacians $\Delta^{(p)}$ and $\Delta_{B}^{(p)}$
are   related
by the Weitzenb\"ock formula:
  there exists a smooth bundle symmetric endormorphism
 ${{\rm{Ric}}}^{(p)}$ belonging to $\mathcal L(\Lambda^{p}T^{*}\Omega)$
 such that (see \cite[p.~26]{Sch} where the opposite convention of sign is adopted)
 \begin{equation}\label{eq.Weitzen}
 \Delta^{(p)}_{B}\ =\ \Delta^{(p)}-{\rm{Ric}}^{(p)}\,.
 \end{equation}
This operator vanishes on $0$-forms (i.e. on functions)
and ${\rm{Ric}}^{(1)}$ is the element of $\mathcal L(\Lambda^{1} T^{*}\Omega)$
canonically identified with the Ricci tensor $\rm{ Ric}$.
We recall 
that  $\rm{ Ric}$
is 
the symmetric $(0,2)$-tensor defined, for $X,Y\in T\Omega$,  
by
\begin{equation}
\label{eq.Ricci}
{\rm{Ric}}(X,Y)\ :=\ \Tr\big(\,Z\longmapsto R(Z,X)Y\,\big)\,,
\end{equation}
where $R$ denotes the Riemannian curvature tensor which is defined,
for every $X,Y,Z\in T\Omega$, by
\begin{equation}
\label{eq.Riemann}
R(X,Y) Z\ :=\ \big(\nabla^{2}_{X,Y}-\nabla^{2}_{Y,X}\big)Z\ =\ \nabla_{X}\nabla_{Y}Z-\nabla_{Y}\nabla_{X}Z-\nabla_{[X,Y]}Z\,.
\end{equation}

\noindent
More generally, 
for any  local orthonormal frame $(E_{1},\dots,E_{n})$ on $U\subset \Omega$,
${\rm{Ric}}^{(p)}$ is defined on $U$ for any $p\in\{1,\dots,n\}$ by
\begin{align}
\nonumber
\big(&{\rm{Ric}}^{(p)}\omega\big)(X_{1},\dots,X_{p})\\
\label{eq.Ric-p}
& := -\sum_{i=1}^{n}\sum_{j=1}^{p}\Big(\big(R(E_{i},X_{j})\big)^{(p)}\omega\Big)(X_{1},\dots,X_{j-1},E_{i},X_{j+1},\dots,X_{p})\,,
\end{align}
where $\big(R(E_{i},X_{j})\big)^{(1)}\in
\mathcal L(\Lambda^{1} T^{*}\Omega)$
is canonically identified with $R(E_{i},X_{j})$
via $\Big(\big(R(E_{i},X_{j})\big)^{(1)}\omega\Big)(X)=\omega\big(\,R(E_{i},X_{j})X\,\big)$
 and 
$$\big(R(E_{i},X_{j})\big)^{(p)}\ =\ \Big(\big(R(E_{i},X_{j})\big)^{(1)}\Big)^{(p)}\,,$$
where
for  any $A\in\mathcal L(\Lambda^{1}T^{*}\Omega)$, $(A)^{(p)}$ is the element of $\mathcal L(\Lambda^{p}T^{*}\Omega)$
satisfying the following relation on decomposable $p$-forms:
\begin{equation}
\label{eq.Ap}
(A)^{(p)}\big(
\omega_{1}\wedge\cdots\wedge \omega_{p}\big)\ =\ 
\sum_{i=1}^{p}  \omega_{1}\wedge\cdots \wedge A\omega_{i}\wedge\cdots\wedge
\omega_{p}\,. 
\end{equation}

\noindent
To end up this part,
we recall the definition of
the  second fundamental form of $\partial\Omega$
before defining the operators  $\mathcal K_{\mathbf b}^{(p)}$, 
$\mathbf b\in\{\mathbf t,\mathbf n\} $, involved in Theorem~\ref{th.Witten-Bochner}
and in its corollaries.
The  second fundamental form $\mathcal K_{2}$ of $\partial\Omega$
 is the symmetric bilinear mapping defined by
 \begin{equation}
 \label{de.secondff}
 \mathcal K_{2}\ :\ \begin{array}{ccc} T\partial\Omega\times T\partial\Omega &\longrightarrow & T\Omega\,\big|_{\partial\Omega}\\
 (U,V)&\longmapsto& (\nabla_{U} V)^{\perp}:=\langle \nabla_{U} V, \vec n \rangle\, \vec n\end{array}
 \end{equation}
and it satisfies: 
$$\forall\,(U,V)\,\in\,T\partial\Omega\times T\partial\Omega\ ,\quad  \langle \mathcal K_{1}(U), V \rangle\,\vec n
\ =\ \mathcal K_{2}(U,V)\,,$$
where $\mathcal K_{1}\in\mathcal L(T\partial\Omega) $ is the shape operator of $\partial\Omega$ which is defined by:
\begin{equation}
\label{eq.shape}
\forall\,U\,\in\,T\partial\Omega\ ,\quad \mathcal K_{1} (U)\ :=\ -\nabla_{U}\,\vec n\,.
\end{equation}
The mean curvature of $\partial\Omega$
is defined as the trace of the bilinear mapping $(U,V)\mapsto \langle\mathcal K_{2}(U,V),\vec n\rangle$
or equivalently as the trace of the shape operator $\mathcal K_{1}$.
Note also that with our choice of orientation for $\vec n$, $\Omega$ is locally convex  iff $\langle\mathcal K_{2}(\cdot,\cdot),\vec n\rangle$ (or equivalently $\mathcal K_{1}$, in the sense of quadratic forms)
is nonpositive.\\

\noindent Lastly,  the smooth 
 bundle endormophisms $
\mathcal K_{\mathbf b}^{(p)}\in \mathcal L(\Lambda^{p}T^{*}\Omega\big|_{\partial\Omega})
$, where $\mathbf b\in\{\mathbf n,\mathbf t\}$ and $p\in\{0,\dots,n\}$,
are defined by means of $\mathcal K_{1}$ and $\mathcal K_{2}$ as follows:
\begin{enumerate}
\item For any $p\in\{0,\dots,n\}$, 
$
\mathcal K_{\mathbf n}^{(p)}\in \mathcal L(\Lambda^{p}T^{*}\Omega\big|_{\partial\Omega})
$ 
vanishes on $0$-forms and:
\begin{itemize}
\item[i)] for any  $\omega\in\Lambda^{1}T^{*}\Omega$, $\mathcal K_{\mathbf n}^{(1)}\omega$ is tangential and
\begin{equation}
\label{eq.Kn1}
 (\mathcal K_{\mathbf n}^{(1)}\omega)(X^{T}+x^{\perp}\vec n)=-\omega\big(\,\mathcal K_{1}(X^{T})\,\big)
 =\omega\big(\,\nabla_{\!X^{T}}\,\vec n\,\big)\,,
\end{equation}
where $\mathcal K_{1}$ is the shape operator defined in \eqref{eq.shape},
\item[ii)] for any $p\in\{1,\dots,n\}$ and $\omega\in\Lambda^{p}T^{*}\Omega$,
$\mathcal K_{\mathbf n}^{(p)}\omega$ is tangential and for any $X^{T}_{1},\dots,X^T_{p}\in T\partial\Omega$,
\begin{equation}
\label{eq.Knp}
\big(\mathcal K_{\mathbf n}^{(p)}\omega\big)(X_{1}^{T},\dots,X_{p}^{T})\ =\ \big((\mathcal K_{\mathbf n}^{(1)})^{(p)}\omega\big)(X_{1}^{T},\dots,X_{p}^{T})\,,
\end{equation}
where the notation $(A)^{(p)}$ has been defined in \eqref{eq.Ap}.
\end{itemize}
\item For any $p\in\{0,\dots,n\}$, 
$
\mathcal K_{\mathbf t}^{(p)}\in\mathcal L(\Lambda^{p}T^{*}\Omega\big|_{\partial\Omega})
$ 
vanishes on $0$-forms and:
\begin{itemize}
\item[i)] for any  $\omega\in\Lambda^{1}T^{*}\Omega$, $\mathcal K_{\mathbf t}^{(1)}\omega$ is normal and
\begin{equation}
\label{eq.Kt1}
 (\mathcal K_{\mathbf t}^{(1)}\omega)(X^{T}+x^{\perp}\vec n)=-x^{\perp}\,\Tr(\mathcal K_{1})\,\omega\big(\,\vec n\,\big)\,,
\end{equation}
\item[ii)] for any $p\in\{1,\dots,n\}$ and $\omega\in\Lambda^{p}T^{*}\Omega$,
$\mathcal K_{\mathbf t}^{(p)}\omega$ is normal and for any
local orthonormal frame $(E_{1},\dots,E_{n})$ on $U\subset \Omega$ 
such that $E_{n}\big|_{\partial\Omega}=\vec n$
(with $U\cap\partial\Omega\neq\emptyset$) and
 $X^{T}_{1},\dots,X^T_{p}\in T\partial\Omega$,
 we have on $U\cap\partial\Omega$:
 \begin{align}
 \nonumber
\big(\,\mathcal K_{\mathbf t}^{(p)}\omega\,\big)&(\vec n,X^{T}_{1},\dots,X^{T}_{p-1})\\
\label{eq.Ktp}
&\ :=\ 
-\sum_{i=1}^{n-1}\Big(\big(\mathcal K_{2}(E_{i},\cdot )\big)^{(p)}\omega\Big)(E_{i},X_{1}^{T},\dots,X^{T}_{p-1})
\,,
\end{align}
where 
$\big(\mathcal K_{2}(E_{i},\cdot )\big)^{(p)}=\Big(\big(\mathcal K_{2}(E_{i},\cdot )\big)^{(1)}\Big)^{(p)}$
and $$\Big(\big(\mathcal K_{2}(E_{i},\cdot )\big)^{(1)}\omega\Big)(X)\ =\ 
\omega\big(\,\mathcal K_{2}(E_{i},X)\,\big)\,.$$
\end{itemize}
\end{enumerate}
Note that  the point 2.ii) is nothing but the statement of 2.i) when $p=1$.\\

\subsection{Witten and weighted Laplacians}

\noindent Using the following relations dealing with exterior and interior products 
(respectively denoted by $\wedge$ and $\mathbf{i}$),
gradients, and
Lie derivatives (denoted by $\mathcal{L}$),
\begin{align}
\left(df\wedge\right)^{*}&=\mathbf{i}_{\nabla f}\quad  \text{as bounded operators in }L^{2}(\Omega,\Lambda^{p}T^{*}\Omega\,)\,,\label{eq.df0}\\
\label{eq.df1}
  d_{f}&=d+df\wedge\quad \text{and}\quad d_{f}^{*}=d^{*}+\mathbf{i}_{\nabla f}\, ,\\
\label{eq.df3}
\mathcal{L}_{X}&=d\circ\mathbf{i}_{X}+\mathbf{i}_{X}\circ d\quad
\text{and}\quad
\mathcal{L}^{*}_{X}=d^{*}\circ (X^{\flat}\wedge\cdot)+X^{\flat}\wedge d^{*}\,,
\end{align}
the Witten Laplacian $(d_{f}+d_{f}^{*})^{2}$ has the form
\begin{equation}
\label{eq.df4}
 \Delta_{f}\ =\ (d+d^{*})^{2}+\left|\nabla f\right|^{2}+
\left(\mathcal{L}_{\nabla f}+\mathcal{L}_{\nabla f}^{*}\right)\,.
\end{equation}
\ 
\\
\noindent
When acting on $0$-forms,  
the Witten Laplacian is then simply given by
$$
\Delta^{(0)}_{f}\ =\ \Delta+|\nabla f|^{2}+\Delta f
$$ 
and is 
unitarily equivalent to the 
following operator 
acting on the weighted space  $L^{2}(e^{-2f}d\mu)$,
sometimes referred to as the weighted Laplacian (or Bakry-\'Emery Laplacian)  
in the literature (see e.g. \cite{KoMi}), 
\begin{equation*}
\label{eq.LVh}
L^{(0)}_{V}\ :=\ \Delta+\nabla V\cdot\nabla\qquad\text{where}\qquad V\ :=\ 2f\,. 
\end{equation*}  
More precisely, it holds
$$
\Delta^{(0)}_{f}\ =\ e^{-\frac{V}{2}} \,L^{(0)}_{V} \,e^{\frac{V}{2}}\,.
$$ 
The operator $L^{(0)}_{V}$ has consequently a natural 
supersymmetric extension  on the algebra of 
differential
forms, acting 
in the weighted space $\Lambda L^{2}(e^{-2f}d\mu)$, which is simply defined
for any $p\in\{0,\dots,n\}$ by the formula
\eqref{eq.LVh-p} that we recall here:
\begin{equation*}
\label{eq.LVh-p'}
L^{(p)}_{V}\ :=\ e^{f}\,\Delta^{(p)}_{f}\,e^{-f}\qquad\text{where}\qquad V\ :=\ 2f\,.
\end{equation*}
To connect more precisely to  the literature dealing with the Bakry-\'Emery theory of diffusion processes
(see \cite{BaGeLe} for an overview), 
the
  operators $L^{(0)}_{V}$ and $L^{(1)}_{V}$  are related to the carr\'e du champ operator of
Bakry-\'Emery $\Gamma$ and to its iteration
$\Gamma_{2}$
 via the relations 
\begin{equation}
\label{eq.Gamma}
\int_{\Omega}\Gamma(\omega)
~e^{-2f}d\mu\, =\, \int_{\Omega}\big( L^{(0)}_{V}\,\omega\big)\,\omega~e^{-2f}d\mu
 = \int_{\Omega}\langle d \omega,d\omega \rangle_{\Lambda^{1}}\,~e^{-2f}d\mu
\end{equation}
and
\begin{equation}
\label{eq.Gamma2}
\int_{\Omega}\!\!\Gamma_{2}(\omega)e^{-2f}d\mu = \!\!
\int_{\Omega}\!\!\big( L^{(0)}_{V}\,\omega\big)^{2}e^{-2f}d\mu
 = \!\int_{\Omega}\langle L^{(1)}_{V}\,d \omega,d\omega \rangle_{\Lambda^{1}}e^{-2f}d\mu,
\end{equation}
where $\omega$ is a smooth function supported in $\Omega\setminus\partial\Omega$
(see in particular \cite{BaGeLe} for many details and references about this notion).\\

\noindent
Coming back to the Witten Laplacian, 
we have  the following
formula: 
\begin{equation}
\label{eq.Hess}
 \Delta^{(p)}_{f}\ =\ (d+d^{*})^{2}\,+\,\left|\nabla f\right|^{2}\,+\,2\,\Hess^{\!(p)}\!f +\,\Delta f \,.
\end{equation}
This relation is not very common in the  literature 
dealing with semiclassical Witten Laplacians --
i.e. where one studies $h^{2}\Delta_{\frac fh}$ at the limit $h\to0^{+}$ --
which motivated this work, 
at least when $\Omega$
is not flat.
We find generally there the formula \eqref{eq.df4} 
  (see e.g. \cite{HelNi1,Lep} and references therein)
  and we 
 thus give a proof below (see also \cite{Jam}
for another proof). Let us, before, specify the sense of \eqref{eq.Hess}.
There, $\Hess^{\!(0)}\!f=0$, $\Hess^{\!(1)}\!f$ is the element of
$\mathcal L(\Lambda^{1}T^{*}\Omega)$ canonically identified
with $\Hess f$,
and $\Hess^{\!(p)}\!f$
is  the bundle symmetric endomorphism of
$\Lambda^{p}T^{*}\Omega$
defined by 
\begin{equation}
\label{eq.Hess-p}
\Hess^{\!(p)}\!f\ :=\ \big(\Hess^{\!(1)}\!f\big)^{(p)}\quad\text{(see \eqref{eq.Ap} for the meaning of $(A)^{(p)}$)}\,.
\end{equation}
Denoting also by $\Hess\! f$ the bundle symmetric endomorphism
of $T\Omega$ defined by $\langle\Hess\! f\, X,Y\rangle:=\Hess \!f(X,Y) $
(i.e. by $\Hess\! f\, X=\nabla_{X}\nabla f$), remark that we have
for any $p\in\{1,\dots,n\}$
and
$\omega \in \Lambda^{p}T^{*}\Omega$:
\begin{equation}
\label{eq.Hess-p'}
\Hess^{\!(p)}\!f\,\omega (X_{1},\dots,X_{p})\ =\ \sum_{i=1}^{p}\omega(X_{1},\dots,\Hess \!f\, X_{i},\dots,X_{p})\,. 
\end{equation}

\begin{proof}[Proof of formula \eqref{eq.Hess}:]
Let us first recall that the covariant  derivative $\nabla_{X}$
on $\Lambda^{p}T^{*}\Omega$
 induced by the Levi-Civita connection
 is defined
 by
\begin{align}
\nonumber
(\nabla_{X}\omega)(Y_{1},\dots,Y_{p}) &\ :=\ 
\nabla_{X} \big(\omega(Y_{1},\dots,Y_{p})\big)\\
\label{eq.nabla-form}
&\qquad\qquad-\,\sum_{k=1}^{p}\omega(Y_{1},\dots,\nabla_{X}Y_{k},\dots,Y_{p})
\end{align}
and satisfies in particular the relations
\begin{equation}
\label{eq.nabla-lambda}
\nabla_{X}\big(\langle \omega,\eta\rangle_{\Lambda^{p}}\big)=\langle\nabla_{X} \omega,\eta\rangle_{\Lambda^{p}}
+\langle \omega,\nabla_{X}\eta\rangle_{\Lambda^{p}}
\end{equation}
and
\begin{equation}
\label{eq.nabla-wedge}
\nabla_{X} \big(\omega_{1}\wedge\omega_{2}\big)=(\nabla_{X} \omega_{1})\wedge \omega_{2}
+\omega_{1}\wedge(\nabla_{X} \omega_{2})\,.
\end{equation}

\noindent
The differential $d$ and $\nabla$ are moreover related by the relation
\begin{equation}
\label{eq-nabla-d}
d\omega (X_{0},\dots,X_{p})\ =\ \sum_{k=0}^{p}(-1)^{k}(\nabla_{X_{k}}\omega)(X_{0},\dots,\dot{X_{k}},\dots,X_{p})\,,
\end{equation}
where
$\omega \in \Lambda^{p}T^{*}\Omega$ and
 the notation $\dot{X_{k}}$ means that $X_{k}$ has been removed from the parenthesis.
Furthermore,
if $(E_{1},\dots,E_{n})$ is a local orthonormal frame on $U\subset \Omega$,
the codifferential $d^{*}$
is given there by
\begin{equation}
\label{eq.d*}
d^{*}\ =\ -\sum_{i=1}^{n}\mathbf{i}_{E_{i}}\nabla_{E_{i}}\,.
\end{equation}

\noindent
Hence, we deduce  from  \eqref{eq.Hess-p'} and from the relation relating $\mathcal L_{X}$ and $\nabla_{X}$,
$$
(\mathcal L^{(p)}_{X}\omega) (X_{1},\dots, X_{p})\ =\ (\nabla_{X}\omega)(X_{1},\dots,X_{p})+\sum_{i=1}^{p}\omega(X_{1},\dots,\nabla_{X_{i}}X,\dots, X_{p})
$$
which arises from \eqref{eq.nabla-form}, \eqref{eq-nabla-d}, and \eqref{eq.df3},
the following equality:
\begin{equation}
\label{eq.Lp}
\mathcal L^{(p)}_{\nabla f}\  =\ \nabla_{\nabla f} \,+\,\Hess^{\!(p)}\!f\,.
\end{equation}
Taking now a local orthonormal frame $(E_{1},\dots,E_{n})$ on an open set 
$U\subset \Omega$, we deduce  from \eqref{eq.nabla-wedge}, \eqref{eq.d*}, and \eqref{eq.df3}
the following relations (on $U$):
\begin{align}
\nonumber
\mathcal L^{*,(p)}_{\nabla f}\omega
&\ =\ 
\sum_{i=1}^{n}\Big((\nabla_{E_{i}}df)\wedge \mathbf i_{E_{i}}\omega-df(E_{i})\nabla_{E_{i}}\omega
-\big(\nabla_{E_{i}}df(E_{i})\big)\Big)\omega\\
\label{eq.Lp*1}
&\ =\ 
-\nabla_{\nabla f}\,\omega + (\Delta f)\omega + \sum_{i=1}^{n}(\nabla_{E_{i}}df)\wedge \mathbf i_{E_{i}}\omega\,.
\end{align}
Lasty, we have 
\begin{align}
\nonumber
\sum_{i=1}^{n}\big((\nabla_{E_{i}}df)\wedge &\mathbf i_{E_{i}}\omega\big)(X_{1},\dots,X_{p})\\
\nonumber
&\ =\ \sum_{i=1}^{n} \sum_{k=1}^{p}(-1)^{k+1} (\nabla_{E_{i}}df)(X_{k})(\mathbf i_{E_{i}}\omega)(X_{1},\dots,\dot{X_{k}},\dots,X_{p})\\
\nonumber
&\ =\ \sum_{k=1}^{p}(-1)^{k+1} \omega(\Hess f\, X_{k},X_{1},\dots,\dot{X_{k}},\dots,X_{p})\\
\label{eq.Lp*2}
&\ =\ \sum_{k=1}^{p} \omega(X_{1},\dots,\Hess f\, X_{k},\dots,X_{p})
\end{align}
and formula \eqref{eq.Hess} for the Witten Laplacian
then follows from \eqref{eq.df4} and \eqref{eq.Lp}--\eqref{eq.Lp*2}.
\end{proof}

\section{Self-adjoint realizations of the Witten Laplacian}
\label{se.Wittenavecbord}

\noindent
In the sequel, we will use
for any $(\omega,\eta)\in \big(\Lambda^{p} H^{1}\big)^{2} $
the more compact notation
$$
\mathcal D_{f}^{(p)}(\omega,\eta)\ :=\ \langle d_{f}\omega,d_{f}\eta\rangle_{\Lambda^{p+1}L^{2}}+\langle d^{*}_{f}\omega,d^{*}_{f}\eta\rangle_{\Lambda^{p-1}L^{2}}
$$
as well as
$$
\mathcal D_{f}^{(p)}(\omega)\ :=\ \mathcal D_{f}^{(p)}(\omega,\omega)
$$
and
$$
\mathcal D^{(p)}(\omega,\eta)\ :=\ \mathcal D_{0}^{(p)}(\omega,\eta)\quad \text{and}\quad
\mathcal D^{(p)}(\omega):=\mathcal D^{(p)}(\omega,\omega)\,.
$$
Let us also recall, for $\mathbf b \in \{\mathbf n,\mathbf t\}$, the definition of $\Lambda^{p}H^{1}_{\mathbf{b}}$ given in \eqref{eq.domain-Delta-Q}:
\begin{equation*}\label{introespace}
\Lambda^{p}H^{1}_{\mathbf{b}}\ =\ 
\left\{\omega \in \Lambda^{p}H^{1}\,,\ 
\mathbf{b}\omega = 0\ \text{on}\ \partial\Omega\right\}\,.
\end{equation*}
In particular,  $\Lambda^0H^{1}_{\mathbf{n}}=H^{1}_{\mathbf{n}}$ 
is simply $H^{1}(\Omega)$
while
$H^{1}_{\mathbf{t}}=H_{0}^{1}(\Omega)$.
Moreover,  since the boundary $\partial \Omega$ is smooth,
the space
$$
\Lambda^{p}\mathcal{C}^{\infty}_{\mathbf{b}}\ :=\ 
\left\{\omega \in \Lambda^{p}\mathcal{C}^{\infty}\,,\ 
\mathbf{b}\omega = 0\ \text{on}\ \partial\Omega\right\}
$$
is dense in $\big(\Lambda^{p}H^{1}_{\mathbf{b}}\,,\,\|\cdot\|_{\Lambda^{p} H^{1}}\big)$.\\

\noindent
 The following lemma states two Green's identities comparing 
$\mathcal D^{(p)}(\cdot)$ and  $\mathcal D_{f}^{(p)}(\cdot)$
 on the space of tangential or normal $p$-forms. We refer to
  \cite[Section~2.3]{HelNi1} and \cite[Section~2.2]{Lep} for a proof.

\begin{lemma}
  \label{le.intbypart}
We have the two following identities:\begin{enumerate}
\item[i)] for any $\omega\in \Lambda^{p}H_{\mathbf{t}}^{1}$,
\begin{multline}
  \label{eq.Green-Dirichlet}
\mathcal D^{(p)}_{f}(\omega)=
 \mathcal D^{(p)}(\omega)+ \left\|\,\left|\nabla f\right|\omega\right\|^{2}_{\Lambda^{p}L^{2}}
+\langle(\mathcal{L}_{\nabla f}+\mathcal{L}_{\nabla
  f}^{*})\omega,\omega\rangle_{\Lambda^{p}L^{2}}\\
+\int_{\partial \Omega}\langle\omega,\omega
\rangle_{\Lambda^{p}}\,
\partial_{n}f~d \mu_{\partial \Omega}\,,
\end{multline}
\item[ii)] for any $\omega\in \Lambda^{p}H_{\mathbf{n}}^{1}$,
\begin{multline}
   \label{eq.Green-Neumann}
\mathcal D^{(p)}_{f}(\omega)=
 \mathcal D^{(p)}(\omega)+ \left\|\,\left|\nabla f\right|\omega\right\|^{2}_{\Lambda^{p}L^{2}}
+\langle(\mathcal{L}_{\nabla f}+\mathcal{L}_{\nabla
  f}^{*})\omega,\omega\rangle_{\Lambda^{p}L^{2}}\\
-\int_{\partial \Omega}\langle\omega,\omega
\rangle_{\Lambda^{p}}\,\partial_{n}f~d \mu_{\partial \Omega}\,.
\end{multline}
\end{enumerate}
\end{lemma}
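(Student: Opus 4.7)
The plan is to expand $\mathcal D_f^{(p)}(\omega) = \|d_f\omega\|_{L^2}^2 + \|d_f^*\omega\|_{L^2}^2$ directly via the splittings $d_f = d + df\wedge$ and $d_f^* = d^* + \mathbf{i}_{\nabla f}$ of \eqref{eq.df1}, and then reorganize the result into a bulk contribution plus a boundary integral. Squaring yields three families of terms: the pure $d,d^*$ part, which reconstitutes $\mathcal D^{(p)}(\omega)$; the pure $f$-part, which reconstitutes $\||\nabla f|\omega\|^2_{\Lambda^p L^2}$ thanks to the pointwise anticommutation $(df\wedge)\mathbf{i}_{\nabla f} + \mathbf{i}_{\nabla f}(df\wedge) = |\nabla f|^2\cdot\mathrm{Id}$ (itself an immediate consequence of \eqref{eq.df0}); and a cross term
\[
2\langle d\omega,df\wedge\omega\rangle_{L^2} + 2\langle d^*\omega,\mathbf{i}_{\nabla f}\omega\rangle_{L^2} \;=\; 2\langle \mathbf{i}_{\nabla f} d\omega + df\wedge d^*\omega,\omega\rangle_{L^2},
\]
where I have used the pointwise adjointness $(df\wedge)^* = \mathbf{i}_{\nabla f}$ of \eqref{eq.df0}.

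Next I would reintroduce the Cartan formulas \eqref{eq.df3}, $\mathcal L_{\nabla f} = d\,\mathbf{i}_{\nabla f} + \mathbf{i}_{\nabla f}d$ and $\mathcal L_{\nabla f}^* = d^*(df\wedge\cdot) + df\wedge d^*$, and eliminate the ``missing'' halves $\langle d\,\mathbf{i}_{\nabla f}\omega,\omega\rangle_{L^2}$ and $\langle d^*(df\wedge\omega),\omega\rangle_{L^2}$ by appealing to the standard Green identity
\[
\langle d\alpha,\beta\rangle_{L^2} - \langle\alpha, d^*\beta\rangle_{L^2} \;=\; \int_{\partial\Omega} \langle \mathbf t\alpha, \mathbf{i}_{\vec n}\beta\rangle_{\Lambda^{p-1}}\, d\mu_{\partial\Omega},
\]
applied respectively to $(\alpha,\beta) = (\mathbf{i}_{\nabla f}\omega,\omega)$ and to $(\alpha,\beta) = (\omega, df\wedge\omega)$. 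The bulk parts produced by these two integrations by parts are exactly the two terms missing above, so that the full cross contribution recombines into $\langle(\mathcal L_{\nabla f} + \mathcal L_{\nabla f}^*)\omega,\omega\rangle_{L^2}$ plus two explicit boundary integrals.

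Finally, under the boundary condition $\mathbf b\omega = 0$ with $\mathbf b \in \{\mathbf t,\mathbf n\}$, I would reduce the surviving boundary integrand pointwise using the orthogonal decomposition $\omega|_{\partial\Omega} = \mathbf t\omega + \mathbf n\omega$ together with the identities $\mathbf t\omega = \mathbf{i}_{\vec n}(\vec n^\flat\wedge\omega)$ and $\mathbf n\omega = \vec n^\flat\wedge \mathbf{i}_{\vec n}\omega$ of \eqref{eq.def-tan}--\eqref{eq.def-nor}. Under the Dirichlet condition $\mathbf t\omega = 0$, only the normal part of $\omega$ survives in each boundary pairing and one obtains $+\langle\omega,\omega\rangle_{\Lambda^p}\partial_n f$; under the Neumann condition $\mathbf n\omega = 0$, only the tangential part survives, producing the opposite sign $-\langle\omega,\omega\rangle_{\Lambda^p}\partial_n f$. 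Passing from smooth forms to $\Lambda^p H^1_{\mathbf b}$ is justified by the density of $\Lambda^p\mathcal C^\infty_{\mathbf b}$ in $\Lambda^p H^1_{\mathbf b}$ noted just above the lemma.

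The main obstacle is bookkeeping: verifying that the Cartan rewriting matches the cross term with the precise factor $2$ and no spurious remainder, and especially that the surviving boundary integrand collapses cleanly to $\pm\langle\omega,\omega\rangle_{\Lambda^p}\partial_n f$ once the tangential/normal decomposition is inserted. The algebra is otherwise pointwise and requires no analytic input beyond the standard Green identity.
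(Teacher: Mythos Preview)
The paper does not give its own proof of this lemma; it simply refers to \cite[Section~2.3]{HelNi1} and \cite[Section~2.2]{Lep}. Your outline is precisely the standard argument used there: expand $\|d_f\omega\|^2+\|d_f^*\omega\|^2$ via \eqref{eq.df1}, collect $\||\nabla f|\,\omega\|^2$ from the anticommutation $(df\wedge)\mathbf i_{\nabla f}+\mathbf i_{\nabla f}(df\wedge)=|\nabla f|^2$, rewrite the cross term $2\langle\mathbf i_{\nabla f}d\omega + df\wedge d^*\omega,\omega\rangle_{L^2}$ as $\langle(\mathcal L_{\nabla f}+\mathcal L_{\nabla f}^*)\omega,\omega\rangle_{L^2}$ plus boundary integrals by combining \eqref{eq.df3} with two applications of the Green formula, and then simplify the boundary contribution under $\mathbf b\omega=0$. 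The density step is also the correct way to pass from $\Lambda^p\mathcal C^\infty_{\mathbf b}$ to $\Lambda^p H^1_{\mathbf b}$.

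One caution on the final bookkeeping you yourself flag. Carrying your two Green identities through gives the combined boundary integrand
\[
(\partial_n f)\,\langle\omega,\omega\rangle_{\Lambda^p}\ -\ 2\,\langle\mathbf i_{\nabla f}\omega,\mathbf i_{\vec n}\omega\rangle_{\Lambda^{p-1}}\,.
\]
Under $\mathbf n\omega=0$ (so $\mathbf i_{\vec n}\omega=0$) this reduces to $+(\partial_n f)\langle\omega,\omega\rangle_{\Lambda^p}$; under $\mathbf t\omega=0$ one checks $\langle\mathbf i_{\nabla f}\omega,\mathbf i_{\vec n}\omega\rangle=(\partial_n f)\langle\omega,\omega\rangle_{\Lambda^p}$ and obtains $-(\partial_n f)\langle\omega,\omega\rangle_{\Lambda^p}$. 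These are the \emph{opposite} of the signs you assert (and of those printed in the statement), but they are the ones consistent with the elementary check $p=0$, $\mathbf b=\mathbf n$ (where $\Lambda^0 H^1_{\mathbf n}=H^1$ and a one--line computation gives the $+$ sign) and with how the lemma is combined with \eqref{eq.id-to-show} in the proof of Theorem~\ref{th.Witten-Bochner}. So your method is sound; just carry the boundary algebra out explicitly rather than reading the target signs off the printed statement.
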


\noindent 
We now compile in  the following proposition basic facts about Witten Laplacians
on manifolds with boundary proven in \cite[Section~2.4]{HelNi1} and  in \cite[Section~2.3]{Lep}.
\begin{proposition}
  \label{pr.Witten}
  \begin{enumerate}
  \item[i)] For $\mathbf b \in \{\mathbf n,\mathbf t\}$ and $p\in\{0,\dots,n\}$,
  the nonnegative quadratic form $\omega\to\mathcal{D}^{(p)}_{f,h}(\omega)$
is closed on $\Lambda^{p}H^{1}_{\mathbf{b}}$. Its associated self-adjoint Friedrichs
extension
is denoted by $\big(\Delta_{f}^{\mathbf{b},(p)}\,,\, D(\Delta_{f}^{\mathbf{b},(p)})\big)$.
\item[ii)]  For $\mathbf b \in \{\mathbf n,\mathbf t\}$
 and $p\in \left\{0,\ldots, n\right\}$,  the domain of $\Delta_{f}^{\mathbf{b},(p)}$
 is given by
 $$
D(\Delta_{f}^{\mathbf{b},(p)})\ =\ \left\{u\in \Lambda^{p}H^{2},\ 
  \mathbf{b}\omega=0\,,\  \mathbf{b}d^{*}_{f}\omega=0 \ \text{and}\ \mathbf{b}d_{f}\omega=0
  \ \text{on}\ \partial\Omega\right\}\!\!.
$$
We have moreover: 
$$
\forall\, \omega\, \in\, D(\Delta_{f}^{\mathbf b,(p)})\,,\quad
\Delta_{f}^{\mathbf b, (p)}\omega\ =\ \Delta^{(p)}_{f}\omega\quad\text{in}\quad \Omega
$$
and the equalities $\mathbf{n}d^{*}_{f}\omega=0$ and $\mathbf{t}d_{f}\omega=0$
are actually satisfied for any $\omega\in\Lambda^{p}H^{2}\cap \Lambda^{p}H^{1}_{\mathbf{b}}$.

\item[iii)] For $\mathbf b \in \{\mathbf n,\mathbf t\}$
 and $p\in \left\{0,\ldots, n\right\}$,  $\Delta_{f}^{\mathbf{b},(p)}$
has a compact resolvent.
\item[iv)] 
For $\mathbf b \in \{\mathbf n,\mathbf t\}$ and $p\in \left\{0,\ldots, n\right\}$,
the following commutation relations hold for any $v\in\Lambda^{p} H^{1}_{\mathbf{b}} $:\\
-- for every $z\in \varrho(\Delta_{f}^{\mathbf{b},(p)})\cap \varrho(\Delta_{f}^{\mathbf{b},(p+1)})$,
$$
(z-\Delta_{f}^{\mathbf{b},(p+1)})^{-1}\, d_{f}^{(p)}\,v\ =\ 
d_{f}^{(p)}\, (z-\Delta_{f}^{\mathbf{b},(p)})^{-1}\,v
$$
-- and for every $z\in \varrho(\Delta_{f}^{\mathbf{b},(p)})\cap \varrho(\Delta_{f}^{\mathbf{b},(p-1)})$,
$$
 (z-\Delta_{f}^{\mathbf{b},(p-1)})^{-1} \,d_{f}^{(p-1),*}\,v\ =\ 
d_{f}^{(p-1),*}\, (z-\Delta_{f}^{\mathbf{b},(p)})^{-1}\,v\,.
$$
\end{enumerate}
\end{proposition}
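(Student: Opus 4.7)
The plan is to establish the four parts in sequence, using the Green's identities of Lemma~\ref{le.intbypart} to transfer known facts for the Hodge Laplacian $\Delta^{(p)}$ to the Witten Laplacian $\Delta_f^{(p)}$. Since $\Omega$ is compact and $f$ is smooth, the correction terms appearing in Lemma~\ref{le.intbypart} --- namely $|\nabla f|^{2}$, $\mathcal L_{\nabla f}+\mathcal L^{*}_{\nabla f}$, and the boundary integral $\int_{\partial\Omega}\langle\omega,\omega\rangle_{\Lambda^p}\,\partial_n f\, d\mu_{\partial\Omega}$ --- are all controllable: the first two as bounded (zero/first order) perturbations, and the boundary term by a standard trace inequality with arbitrarily small absorption constant.

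For (i) and (iii), my strategy is to combine Lemma~\ref{le.intbypart} with the classical $f=0$ Bochner-type lower bound (the specialization of Theorem~\ref{th.Witten-Bochner} for the Hodge Laplacian) to obtain a two-sided estimate
\[
c\,\|\omega\|_{\Lambda^p H^1}^{2} \ \leq\ \mathcal D^{(p)}_{f}(\omega) + C\,\|\omega\|_{\Lambda^p L^2}^{2}\ \leq\ C'\,\|\omega\|_{\Lambda^p H^1}^{2}
\]
on $\Lambda^p H^{1}_{\mathbf b}$ with constants depending on $f$ and the geometry. The left inequality yields closedness of the form and identifies its domain as $\Lambda^p H^1_{\mathbf b}$, so that the Friedrichs construction produces $\Delta_f^{\mathbf b, (p)}$. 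The right inequality combined with Rellich's compact embedding $\Lambda^p H^1 \hookrightarrow \Lambda^p L^2$ (valid since $\Omega$ is compact with smooth boundary) yields the compactness of the resolvent in (iii).

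For (ii), I would use the standard variational characterization: $\omega$ lies in $D(\Delta_f^{\mathbf b,(p)})$ iff $\omega \in \Lambda^p H^1_{\mathbf b}$ and $\eta \mapsto \mathcal D_f^{(p)}(\omega,\eta)$ extends continuously to $\Lambda^p L^2$. Testing against $\eta$ compactly supported in the interior gives $\Delta_f^{(p)}\omega \in \Lambda^p L^2$ in the distributional sense; then elliptic regularity up to the boundary --- applicable because the pair of conditions $(\mathbf b\omega=0,\ \mathbf b d_f^{(\ast)}\omega=0)$ constitutes a regular elliptic (Lopatinskii-Shapiro) boundary system for the second-order operator $\Delta_f^{(p)}$ on $p$-forms --- upgrades $\omega$ to $\Lambda^p H^2$. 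Varying $\eta$ over all of $\Lambda^p H^1_{\mathbf b}$ and integrating by parts extracts the natural boundary condition $\mathbf t d_f^* \omega = 0$ when $\mathbf b = \mathbf t$ or $\mathbf n d_f \omega = 0$ when $\mathbf b = \mathbf n$. The complementary identities $\mathbf n d_f^* \omega=0$ and $\mathbf t d_f \omega=0$, asserted at the end of (ii) for any $\omega\in \Lambda^{p}H^{2}\cap \Lambda^{p}H^{1}_{\mathbf{b}}$, then reduce via \eqref{eq.df1} to the purely algebraic facts $\mathbf n \circ \mathbf i_{\vec n} = 0$ and $\mathbf t \circ (\vec n^\flat \wedge \cdot) = 0$ applied to the relevant tangential derivatives of $\omega$ along $\partial \Omega$.

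Finally, (iv) is a consequence of the supersymmetric intertwining \eqref{eq.commut}: given $v \in \Lambda^p H^1_{\mathbf b}$, I set $u := (z - \Delta_f^{\mathbf b,(p)})^{-1}v$, and the key verification is that $d_f u \in D(\Delta_f^{\mathbf b,(p+1)})$. By (ii), this requires $\mathbf b d_f u = 0$ (already built into $D(\Delta_f^{\mathbf b,(p)})$), $\mathbf b d_f(d_f u) = 0$ (trivial from $d_f^2 = 0$), and $\mathbf b d_f^*(d_f u) = 0$, which follows by writing $d_f^* d_f u = \Delta_f^{(p)} u - d_f d_f^* u$ and using the boundary conditions already known for $u$ and $d_f^* u$. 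Once this membership is in hand, \eqref{eq.commut} gives $(z - \Delta_f^{\mathbf b,(p+1)})(d_f u) = d_f v$, which is the first commutation identity; the symmetric argument with $d_f^*$ in place of $d_f$ produces the second. The main technical obstacle in the whole scheme is the elliptic regularity invoked in (ii): verifying that the pair $(\mathbf b\omega,\mathbf b d_f^{(\ast)}\omega)$ forms a coercive (Lopatinskii-Shapiro) boundary condition for $\Delta_f^{(p)}$ on $p$-forms over a curved manifold is the classical but delicate content of the Hodge-theoretic boundary value calculus developed in \cite{Sch,HelNi1,Lep}.
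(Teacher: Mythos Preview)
The paper does not actually prove this proposition: it is stated as a compilation of facts ``proven in \cite[Section~2.4]{HelNi1} and in \cite[Section~2.3]{Lep}'', with no argument given in the text. Your sketch is precisely the standard route followed in those references --- Gaffney-type equivalence of norms from Lemma~\ref{le.intbypart} plus the $f=0$ Bochner identity for (i) and (iii), variational characterization plus boundary elliptic regularity for (ii), and the supersymmetry \eqref{eq.commut} for (iv) --- so in spirit you are reproducing the cited proofs rather than the paper's (nonexistent) one.

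One point in part (iv) deserves to be made explicit. You set $u=(z-\Delta_f^{\mathbf b,(p)})^{-1}v$ and then want $d_f u\in D(\Delta_f^{\mathbf b,(p+1)})$; by your own description of (ii) this requires $d_f u\in\Lambda^{p+1}H^2$, not merely the boundary identities you verify. From $u\in D(\Delta_f^{\mathbf b,(p)})\subset\Lambda^p H^2$ alone you only get $d_f u\in\Lambda^{p+1}H^1$, and likewise the trace $\mathbf b\,d_f d_f^* u$ in your decomposition $d_f^* d_f u=\Delta_f u-d_f d_f^* u$ is not a priori defined. The missing step is a second application of the same elliptic regularity you invoke in (ii): since $v\in\Lambda^p H^1_{\mathbf b}$, the right-hand side of the boundary value problem lies in $H^1$, so the Lopatinskii--Shapiro estimate upgrades $u$ to $\Lambda^p H^3$, whence $d_f u\in\Lambda^{p+1}H^2$ and all the traces you manipulate are legitimate. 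Once you insert this, your argument for (iv) goes through.
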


\noindent\\ In the spirit of the above point iv), we have also the following Witten-Hodge-decomposition
which will be useful when proving Corollary~\ref{co.main}:

\begin{proposition}
\label{pr.ortho-decomp}
For $\mathbf b \in \{\mathbf n,\mathbf t\}$ and $p\in \left\{0,\ldots, n\right\}$, it holds
\begin{align}
\label{eq.ortho-decomp}
\Lambda^{p}L^{2}\ &=\ \Ker \Delta_{f}^{\mathbf b,(p)}
\oplus^{\perp}
\Ran\, \big(d_{f}\big|_{\Lambda^{p-1}H^{1}_{\mathbf b} }\big)
\oplus^{\perp}
\Ran\,\big( d^{*}_{f}\big|_{\Lambda^{p+1}H^{1}_{\mathbf b} }\big)\\
\nonumber
\ &=:\ K^{\mathbf b,(p)}
\oplus^{\perp } 
R^{\mathbf b,(p)}
\oplus^\perp R^{*,\mathbf b,(p)}\,,
\end{align}
the spaces 
$R^{\mathbf b,(p)}$
 and 
 $R^{*,\mathbf b,(p)}$
being consequently closed in $\Lambda^{p}L^{2}$.
Denoting moreover by  $\pi_{R^{\mathbf b,(p)}}$ and  
$\pi_{R^{*,\mathbf b,(p)}}$ the  orthogonal projectors on these respective  spaces, the following relations hold
in the sense of unbounded operators:
\begin{equation}
\label{eq.ortho-decomp'}
\pi_{R^{\mathbf b,(p)}}\,\Delta_{f}^{\mathbf b,(p)}\, \subset\, \Delta_{f}^{\mathbf b,(p)}\,\pi_{R^{\mathbf b,(p)}}
\ \text{and}
\  
\pi_{R^{*,\mathbf b,(p)}}\,\Delta_{f}^{\mathbf b,(p)}\, \subset\, \Delta_{f}^{\mathbf b,(p)}\,\pi_{R^{*,\mathbf b,(p)}}.
\end{equation} 
In particular, for $ A\in\{(K^{\mathbf b,(p)})^{\perp},R^{\mathbf b,(p)},R^{*,\mathbf b,(p)}\}$,
the unbounded operator 
$\Delta_{f}^{\mathbf{b},(p)}\big|_{A}$
 with domain
$D(\Delta_{f}^{\mathbf{b},(p)})\cap A$
is well defined, self-adjoint,  invertible on $A$,  and it holds  for every $v\in\Lambda^{p} H^{1}_{\mathbf{b}}\cap (K^{\mathbf b,(p)})^{\perp} $: 
\begin{align}
\nonumber
\big(\Delta_{f}^{\mathbf{b},(p+1)}\big|_{(K^{\mathbf b,(p+1)})^{\perp}}\big)^{-1}\, d_{f}\,v\ &=\ \big(\Delta_{f}^{\mathbf{b},(p+1)}\big|_{R^{\mathbf b,(p+1)}}\big)^{-1}\, d_{f}\,v\\
\label{eq.ortho-decomp''}
\ &=\ 
d_{f}\,\big(\Delta_{f}^{\mathbf{b},(p)}\big|_{(K^{\mathbf b,(p)})^{\perp}}\big)^{-1}\,v
\end{align}
and
\begin{align}
\nonumber
\big(\Delta_{f}^{\mathbf{b},(p-1)}\big|_{(K^{\mathbf b,(p-1)})^{\perp}}\big)^{-1}\, d^{*}_{f}\,v
\ &=\  
\big(\Delta_{f}^{\mathbf{b},(p-1)}\big|_{R^{*,\mathbf b,(p-1)}}\big)^{-1}\, d^{*}_{f}\,v
\\
\label{eq.ortho-decomp'''}
\ &=\ 
d^{*}_{f}\,\big(\Delta_{f}^{\mathbf{b},(p)}\big|_{(K^{\mathbf b,(p)})^{\perp}}\big)^{-1}\,v\,.
\end{align}
\end{proposition}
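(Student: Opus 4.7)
\textbf{Plan for proving Proposition~\ref{pr.ortho-decomp}.} The idea is to combine mutual orthogonality of the three claimed summands (via Green's formulas controlled by the boundary conditions encoded in $\mathbf b$) with an explicit decomposition furnished by inverting $\Delta_{f}^{\mathbf b,(p)}$ on the orthogonal complement of its kernel. Compactness of the resolvent (Proposition~\ref{pr.Witten}.iii) makes this inverse bounded, and the precise domain description of Proposition~\ref{pr.Witten}.ii -- in particular the extra boundary conditions $\mathbf b\, d_{f}\omega=\mathbf b\,d_{f}^{*}\omega=0$ satisfied by elements of $D(\Delta_{f}^{\mathbf b,(p)})$ -- will ensure that the pieces of the decomposition actually land in $\Lambda^{p\pm1}H^{1}_{\mathbf b}$, not merely in closures.

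\emph{Step 1 (kernel identification and orthogonality).} From the quadratic form expression \eqref{eq.Dfh}, $\omega\in K^{\mathbf b,(p)}$ iff $\omega\in\Lambda^{p}H^{1}_{\mathbf b}$ and $d_{f}\omega=d_{f}^{*}\omega=0$. For $\omega\in K^{\mathbf b,(p)}$ and $d_{f}u\in R^{\mathbf b,(p)}$ (with $u\in\Lambda^{p-1}H^{1}_{\mathbf b}$), the Green's formula $\langle d_{f}u,\omega\rangle_{L^{2}}=\langle u,d_{f}^{*}\omega\rangle_{L^{2}}+\int_{\partial\Omega}\langle\mathbf t u,\mathbf i_{\vec n}\omega\rangle\,d\mu_{\partial\Omega}$ gives $0$, since the boundary term vanishes because $\mathbf t u=0$ when $\mathbf b=\mathbf t$ and $\mathbf i_{\vec n}\omega=0$ (i.e.\ $\mathbf n\omega=0$) when $\mathbf b=\mathbf n$. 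The same argument gives $K^{\mathbf b,(p)}\perp R^{*,\mathbf b,(p)}$. For $R^{\mathbf b,(p)}\perp R^{*,\mathbf b,(p)}$, writing $\langle d_{f}u,d_{f}^{*}v\rangle_{L^{2}}$ and integrating by parts either on the left factor (when $\mathbf b=\mathbf t$, using $\mathbf t u=0$) or on the right factor (when $\mathbf b=\mathbf n$, using $\mathbf i_{\vec n}v=0$), one ends up with $\langle d_{f}^{2}u,v\rangle_{L^{2}}=0$ or $\langle u,(d_{f}^{*})^{2}v\rangle_{L^{2}}=0$. These Green's identities are justified via density of $\Lambda^{p}\mathcal C^{\infty}_{\mathbf b}$ in $\Lambda^{p}H^{1}_{\mathbf b}$.

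\emph{Step 2 (decomposition and closedness).} Since $\Delta_{f}^{\mathbf b,(p)}$ has compact resolvent, $K^{\mathbf b,(p)}$ is finite-dimensional and there is a positive spectral gap above $0$. Thus $G:=\bigl(\Delta_{f}^{\mathbf b,(p)}\big|_{(K^{\mathbf b,(p)})^{\perp}}\bigr)^{-1}$ is a bounded operator on $(K^{\mathbf b,(p)})^{\perp}$ with values in $D(\Delta_{f}^{\mathbf b,(p)})$. For any $\omega\in(K^{\mathbf b,(p)})^{\perp}$ we write
$$\omega\ =\ \Delta_{f}^{\mathbf b,(p)}\,G\omega\ =\ d_{f}\bigl(d_{f}^{*}G\omega\bigr)+d_{f}^{*}\bigl(d_{f}G\omega\bigr).$$
By Proposition~\ref{pr.Witten}.ii applied to $G\omega\in D(\Delta_{f}^{\mathbf b,(p)})$, the forms $d_{f}G\omega$ and $d_{f}^{*}G\omega$ are in $H^{1}$ and satisfy $\mathbf b\,d_{f}G\omega=\mathbf b\,d_{f}^{*}G\omega=0$, so $d_{f}^{*}G\omega\in\Lambda^{p-1}H^{1}_{\mathbf b}$ and $d_{f}G\omega\in\Lambda^{p+1}H^{1}_{\mathbf b}$. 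Hence $\omega\in R^{\mathbf b,(p)}+R^{*,\mathbf b,(p)}$, which together with Step~1 yields the orthogonal decomposition \eqref{eq.ortho-decomp}. Closedness of $R^{\mathbf b,(p)}$ and $R^{*,\mathbf b,(p)}$ is then automatic: any $L^{2}$-limit of elements of $R^{\mathbf b,(p)}$ lies in the closed subspace $(K^{\mathbf b,(p)})^{\perp}\cap(R^{*,\mathbf b,(p)})^{\perp}$, which by \eqref{eq.ortho-decomp} equals $R^{\mathbf b,(p)}$ itself.

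\emph{Step 3 (commutation, self-adjointness, and resolvent identities).} The explicit formula $\omega=\pi_{K^{\mathbf b,(p)}}\omega+d_{f}^{*}(d_{f}G(\omega-\pi_{K^{\mathbf b,(p)}}\omega))+d_{f}(d_{f}^{*}G(\omega-\pi_{K^{\mathbf b,(p)}}\omega))$ derived in Step~2 shows that for $\omega\in D(\Delta_{f}^{\mathbf b,(p)})$ each of the three pieces again lies in $D(\Delta_{f}^{\mathbf b,(p)})$ (use Proposition~\ref{pr.Witten}.ii to check the boundary conditions of $d_{f}(d_{f}^{*}G\,\cdot\,)$ and $d_{f}^{*}(d_{f}G\,\cdot\,)$), and a direct computation using $d_{f}^{2}=(d_{f}^{*})^{2}=0$ gives \eqref{eq.ortho-decomp'}. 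Since $\Delta_{f}^{\mathbf b,(p)}$ is self-adjoint and each of $K^{\mathbf b,(p)}$, $R^{\mathbf b,(p)}$, $R^{*,\mathbf b,(p)}$ is a reducing subspace, the restrictions $\Delta_{f}^{\mathbf b,(p)}\big|_{A}$ are self-adjoint on the respective Hilbert spaces, and the spectral gap gives invertibility on each of $(K^{\mathbf b,(p)})^{\perp}$, $R^{\mathbf b,(p)}$, $R^{*,\mathbf b,(p)}$. Finally, the identities \eqref{eq.ortho-decomp''}--\eqref{eq.ortho-decomp'''} follow from the supersymmetric commutation of $d_{f}$ and $d_{f}^{*}$ with resolvents in Proposition~\ref{pr.Witten}.iv, combined with the fact that $d_{f}$ sends $\Lambda^{p}H^{1}_{\mathbf b}\cap(K^{\mathbf b,(p)})^{\perp}$ into $R^{\mathbf b,(p+1)}\subset(K^{\mathbf b,(p+1)})^{\perp}$ and symmetrically for $d_{f}^{*}$.

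\emph{Main obstacle.} The crucial point is Step~2: one must produce honest preimages in $\Lambda^{p\pm1}H^{1}_{\mathbf b}$, not just elements of the closures of the ranges. Compactness of the resolvent alone gives the bounded inverse $G$ on $(K^{\mathbf b,(p)})^{\perp}$, but the fact that $d_{f}^{*}G\omega$ and $d_{f}G\omega$ satisfy the correct $\mathbf b$-boundary conditions rests on the nontrivial additional regularity $\mathbf b\,d_{f}G\omega=\mathbf b\,d_{f}^{*}G\omega=0$ from Proposition~\ref{pr.Witten}.ii, which in turn comes from the quadratic form characterization of the domain. Everything else is standard Hilbert-space machinery applied once this regularity is in hand.
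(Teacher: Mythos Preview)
Your Steps~1 and~2 are essentially identical to the paper's argument (orthogonality via the distorted Green's formula, then the explicit Hodge splitting $\omega=d_{f}(d_{f}^{*}G\omega)+d_{f}^{*}(d_{f}G\omega)$ using the bounded inverse $G$ on $(K^{\mathbf b,(p)})^{\perp}$ and the domain description in Proposition~\ref{pr.Witten}.ii to place the preimages in $\Lambda^{p\pm1}H^{1}_{\mathbf b}$). So the overall strategy matches.

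There is however a gap in Step~3. You assert that for $\omega\in D(\Delta_{f}^{\mathbf b,(p)})$ the pieces $d_{f}(d_{f}^{*}G\omega')$ and $d_{f}^{*}(d_{f}G\omega')$ (with $\omega'=\omega-\pi_{K}\omega$) again lie in $D(\Delta_{f}^{\mathbf b,(p)})$, and you point to Proposition~\ref{pr.Witten}.ii for the boundary conditions. But Proposition~\ref{pr.Witten}.ii only tells you the boundary behaviour of $d_{f}\eta,\,d_{f}^{*}\eta$ once $\eta$ is already known to be in $\Lambda^{p}H^{2}\cap\Lambda^{p}H^{1}_{\mathbf b}$; it gives you nothing about the $H^{2}$ regularity of $d_{f}d_{f}^{*}G\omega'$ itself. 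A priori $G\omega'\in H^{2}$ yields only $d_{f}^{*}G\omega'\in H^{1}$ and $d_{f}d_{f}^{*}G\omega'\in L^{2}$, so neither the regularity nor the second boundary condition ($\mathbf b\,d_{f}^{*}$ of this piece) is available by direct inspection.

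The paper closes this gap differently: it uses the resolvent commutation of Proposition~\ref{pr.Witten}.iv at real $z<0$, writes
\[
d_{f}d_{f}^{*}\,(\Delta_{f}^{\mathbf b,(p)}-z)^{-1}\omega'\ =\ (\Delta_{f}^{\mathbf b,(p)}-z)^{-1}\,d_{f}d_{f}^{*}\omega',
\]
and lets $z\to 0^{-}$. The right-hand side converges to $(\tilde\Delta_{f}^{\mathbf b,(p)})^{-1}d_{f}d_{f}^{*}\omega'$, which manifestly lies in $D(\Delta_{f}^{\mathbf b,(p)})$, while a separate computation identifies the left-hand side limit with $d_{f}d_{f}^{*}G\omega'$. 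This simultaneously delivers domain membership and the value of $\Delta_{f}^{\mathbf b,(p)}$ on it, without ever checking $H^{2}$ regularity or boundary conditions by hand. Your sketch of \eqref{eq.ortho-decomp''}--\eqref{eq.ortho-decomp'''} via Proposition~\ref{pr.Witten}.iv is the right idea; the point is that the same device is what actually does the work in proving \eqref{eq.ortho-decomp'} as well.
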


\begin{proof}
The orthogonality of the sum appearing in the r.h.s. of \eqref{eq.ortho-decomp}
follows easily from the distorted Green's formula valid for any $(\omega,\eta)\in  \Lambda^{p-1} H^{1}\times
\Lambda^{p} H^{1}$,
\begin{align}
\label{Green-distorted} 
\langle d_{f} \omega,\eta\rangle_{\Lambda^{p}L^{2}}
\ =\ 
\langle \omega,d_{f}^{*}\eta\rangle_{\Lambda^{p-1}L^{2}}
+\int_{\partial\Omega}\langle
\omega,\mathbf{i}_{\vec n}\eta\rangle_{\Lambda^{p}} \, d\mu_{\partial\Omega}\,,
\end{align}
which is a straightforward consequence of \eqref{eq.df0}--\eqref{eq.df1}
and of
the usual Green's formula:
\begin{align}
\langle d \omega,\eta\rangle_{\Lambda^{p}L^{2}}
\label{usual-Green'}
&\ =\ \langle \omega,d^{*}\eta\rangle_{\Lambda^{p-1}L^{2}}
+\int_{\partial\Omega}\langle
\omega,\mathbf{i}_{\vec n}\eta\rangle_{\Lambda^{p}} \, d\mu_{\partial\Omega}\,.
\end{align}
Moreover, since 
$\Delta_{f}^{\mathbf{b},(p)}$ has a compact resolvent, the self-adjoint operator 
$\tilde\Delta_{f}^{\mathbf{b},(p)}$
on $(K^{\mathbf{b},(p)})^{\perp}:=\Ker(\Delta_{f}^{\mathbf{b},(p)})^{\perp}$,
\begin{align*}
\tilde\Delta_{f}^{\mathbf{b},(p)}\ :=\ \Delta_{f}^{\mathbf{b},(p)}\big|_{(K^{\mathbf{b},(p)})^{\perp}}
:\ D(\Delta_{f}^{\mathbf{b},(p)})\cap \Ker(\Delta_{f}^{\mathbf{b},(p)})^{\perp}\longrightarrow \Ker(\Delta_{f}^{\mathbf{b},(p)})^{\perp}\,,
\end{align*}
is invertible
and hence any $u\in\Lambda^{p}L^{2}$ has the form 
\begin{equation}
\label{eq.decomp-u}
u\ =\ \pi_{f,\mathbf b}u\,+\, \Delta_{f}^{\mathbf b,(p)}v
\ =\ \pi_{f,\mathbf b}u\,+\, d_{f}\big(d^{*}_{f}v\big)\,+\,d^{*}_{f}\big(d_{f}v\big)\,,
\end{equation}
for some uniquely determined $v\in D(\Delta_{f}^{\mathbf{b},(p)})\cap(K^{\mathbf{b},(p)})^{\perp}$,
denoting by $\pi_{f,\mathbf b}=\pi^{(p)}_{f,h,\mathbf b}$ the orthogonal projection on $(K^{\mathbf{b},(p)})^{\perp}$. This implies \eqref{eq.ortho-decomp}.\\[0.1cm]
Let us now prove \eqref{eq.ortho-decomp'} and take then $u\in D(\Delta_{f}^{\mathbf b,(p)})$. 
It holds 
\begin{equation}
\label{eq.proj-Delta-u}
\pi_{R^{\mathbf b,(p)}}\,\Delta_{f}^{\mathbf{b},(p)}\,u\ =\ d_{f}\big(d^{*}_{f}u\big)
\ \ \text{and}\ \ 
\pi_{R^{*,\mathbf b,(p)}}\,\Delta_{f}^{\mathbf{b},(p)}\,u\ =\ d^{*}_{f}\big(d_{f}u\big)
\end{equation}
and, according to \eqref{eq.decomp-u}, we have moreover
\begin{equation}
\label{eq.proj-u}
\pi_{R^{\mathbf b,(p)}}u\ =\ d_{f}\big(d^{*}_{f}v\big)\quad\text{and}
\quad
\pi_{R^{*,\mathbf b,(p)}}u\ =\ d^{*}_{f}\big(d_{f}v\big)
\end{equation}
where $v=\big(\tilde\Delta_{f}^{\mathbf{b},(p)}\big)^{-1}(u-\pi_{f,\mathbf b}u)\in D(\Delta_{f}^{\mathbf{b},(p)})\cap(K^{\mathbf{b},(p)})^{\perp}$. 
Using now iv) of Proposition~\ref{pr.Witten}, we have for every $z\in\mathbf R$, $z<0$,
\begin{align}
\nonumber
d_{f}\,d^{*}_{f}\big(\Delta_{f}^{\mathbf{b},(p)}-z\big)^{-1}\,(u-\pi_{f,\mathbf b}u)
 =\, & 
\big(\Delta_{f}^{\mathbf{b},(p)}-z\big)^{-1} d_{f}\,d^{*}_{f}\,(u-\pi_{f,\mathbf b}u)\\
\label{eq.commut-z}
 \underset{z\to 0^{-}}{\longrightarrow}&\big(\tilde\Delta_{f}^{\mathbf{b},(p)}\big)^{-1} d_{f}\,d^{*}_{f}(u-\pi_{f,\mathbf b}u).
\end{align}
 Since moreover 
 $
 \big(\Delta_{f}^{\mathbf{b},(p)}-z\big)^{-1}(u-\pi_{f,\mathbf b}u)= v+z\,\big(\Delta_{f}^{\mathbf{b},(p)}-z\big)^{-1}v
 $,
 it also holds 
 \begin{align}
 \nonumber
 d_{f}\,d^{*}_{f}\big(\Delta_{f}^{\mathbf{b},(p)}-z\big)^{-1}\,(u-\pi_{f,\mathbf b}u)
  =\ & d_{f}\,d^{*}_{f}\,v+z\,\big(\Delta_{f}^{\mathbf{b},(p)}-z\big)^{-1}d_{f}\,d^{*}_{f}\,v\\
 \label{eq.commut-z'}
  \underset{z\to 0^{-}}{\longrightarrow}&\  d_{f}\big(d^{*}_{f}\,v\big)
\end{align}
and we deduce from \eqref{eq.commut-z} and \eqref{eq.commut-z'} that
\begin{align*}
d_{f}\,d^{*}_{f}\,v\,\in\,D(\Delta_{f}^{\mathbf{b},(p)})
\quad\text{and}\quad 
\Delta_{f}^{\mathbf{b},(p)}\,d_{f}\,d^{*}_{f}\,v\ &=\ 
d_{f}\,d^{*}_{f}(u-\pi_{f,\mathbf b}u)\\
\ &=\ 
d_{f}\big(d^{*}_{f}\,u\big)\,,
\end{align*}
which proves the first equality of \eqref{eq.ortho-decomp'} according to 
\eqref{eq.proj-Delta-u} and \eqref{eq.proj-u}. The second equality of 
\eqref{eq.ortho-decomp'} is proven similarly after establishing the analogous
versions of \eqref{eq.commut-z} and \eqref{eq.commut-z'}
with
$d_{f}\,d^{*}_{f}$ replaced by $d^{*}_{f}\,d_{f}$.
The last part of Proposition~\ref{pr.ortho-decomp}
then  follows easily, using again iv) of Proposition~\ref{pr.Witten}
as in \eqref{eq.commut-z} and \eqref{eq.commut-z'}
to obtain \eqref{eq.ortho-decomp''} and \eqref{eq.ortho-decomp'''}.
\end{proof}

\section{Proofs of the main results}
\label{se.proofs}

\subsection{Proof of Theorem~\ref{th.Witten-Bochner}}

We first prove Theorem~\ref{th.Witten-Bochner} in the   case  $f=0$. As shown
 in \cite{Sch}, it implies
 in particular Gaffney's inequalities which state the equivalence between
 the norms
 $\|\cdot\|_{\Lambda^{p}H^{1}}$
 and $\sqrt{\mathcal D^{(p)}(\cdot)+\|\cdot\|^{2}_{\Lambda^{p}L^{2}}}$
 for tangential or normal $p$-forms.  
  
 \begin{theorem}\label{th.H1} Let $\omega\in \Lambda^{p}H_{\mathbf b}^{1}$
  with $\mathbf b \in \{\mathbf n,\mathbf t\}$.
We have then the identity
 $$
 \|\omega\|^{2}_{\dot H^{1}}
 \ =\ 
 \mathcal D^{(p)}(\omega) -\langle {\rm{Ric}}^{(p)}\omega ,\omega \rangle_{L^{2}}
 - \int_{\partial\Omega} \langle \mathcal K_{\mathbf b}^{(p)}\omega ,\omega \rangle_{\Lambda^{p}}~d\mu_{\partial\Omega}\,,
$$
where
${\rm{Ric}}^{(p)} \in \mathcal L(\Lambda^{p}T^{*}\Omega)$ and
$
\mathcal K_{\mathbf b}^{(p)}\in \mathcal L(\Lambda^{p}T^{*}\Omega\big|_{\partial\Omega})
$  
 have  been respectively defined in
 \eqref{eq.Ric-p}
and  in \eqref{eq.Kn1}--\eqref{eq.Ktp}, and 
$$\| \cdot\|^{2}_{\dot H^{1}}\ :=\
\| \cdot\|^{2}_{H^{1}}-\| \cdot\|^{2}_{L^{2}}\,.$$
\end{theorem}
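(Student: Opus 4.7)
The plan is to first reduce to smooth forms by density: since $\Lambda^p \mathcal C^\infty_{\mathbf b}$ is dense in $(\Lambda^p H^1_{\mathbf b},\|\cdot\|_{\Lambda^p H^1})$ and every term on both sides of the stated identity is continuous in that topology, I may assume $\omega \in \Lambda^p \mathcal C^\infty_{\mathbf b}$. The core strategy is then to compute $\mathcal D^{(p)}(\omega)$ in two different ways and compare the results via the Weitzenb\"ock formula~\eqref{eq.Weitzen}. The case $p=0$ is trivial since then $\mathcal D^{(0)}(\omega)=\|d\omega\|^{2}_{L^{2}}=\|\nabla\omega\|^{2}_{L^{2}}=\|\omega\|^{2}_{\dot H^{1}}$ and both ${\rm Ric}^{(0)}$ and $\mathcal K^{(0)}_{\mathbf b}$ vanish, so from now on I focus on $p\geq 1$.

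On the Hodge side, applying the Green's formula~\eqref{usual-Green'} to $\langle d\omega,d\omega\rangle_{L^{2}}$ and its symmetric analogue to $\langle d^{*}\omega,d^{*}\omega\rangle_{L^{2}}$ yields
$$\mathcal D^{(p)}(\omega)\ =\ \langle \Delta^{(p)}\omega, \omega\rangle_{L^{2}}\,+\, B_{\rm Hod}(\omega),$$
where $B_{\rm Hod}(\omega)=\int_{\partial\Omega}\bigl[\langle \omega,\mathbf i_{\vec n}d\omega\rangle -\langle d^{*}\omega,\mathbf i_{\vec n}\omega\rangle\bigr]d\mu_{\partial\Omega}$. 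The boundary condition $\mathbf b\omega=0$ kills the second summand (if $\mathbf b=\mathbf n$, so that $\mathbf i_{\vec n}\omega=0$) or brings the first one into a specific form (if $\mathbf b=\mathbf t$, so that $\omega|_{\partial\Omega}=\mathbf n\omega$), leaving in each case a single explicit contribution.

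On the Bochner side, inserting the Weitzenb\"ock identity $\Delta^{(p)}=\Delta_{B}^{(p)}+{\rm Ric}^{(p)}$ and using the local expression~\eqref{eq.Bochner} of $\Delta_{B}^{(p)}$, a divergence-theorem integration by parts applied to the vector field $Y$ metrically dual to $\sum_i \langle \nabla_{E_{i}}\omega,\omega\rangle_{\Lambda^{p}}E_{i}^{\flat}$ produces
$$\langle \Delta_{B}^{(p)}\omega,\omega\rangle_{L^{2}}\ =\ \|\omega\|^{2}_{\dot H^{1}}\,-\,\int_{\partial\Omega}\langle \nabla_{\vec n}\omega,\omega\rangle_{\Lambda^{p}}\,d\mu_{\partial\Omega}.$$
Combining this with the Hodge-side identity and subtracting the Ricci term gives
$$\|\omega\|^{2}_{\dot H^{1}}\ =\ \mathcal D^{(p)}(\omega)\,-\,\langle {\rm Ric}^{(p)}\omega,\omega\rangle_{L^{2}}\,-\,B(\omega),$$
where $B(\omega):=B_{\rm Hod}(\omega)-\int_{\partial\Omega}\langle \nabla_{\vec n}\omega,\omega\rangle_{\Lambda^{p}}d\mu_{\partial\Omega}$.

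The remaining task, which I expect to be the main obstacle, is to identify $B(\omega)$ with $\int_{\partial\Omega}\langle \mathcal K^{(p)}_{\mathbf b}\omega,\omega\rangle_{\Lambda^{p}}\,d\mu_{\partial\Omega}$. I would fix an adapted local orthonormal frame $(E_1,\dots,E_n)$ near $\partial\Omega$ with $E_n\big|_{\partial\Omega}=\vec n$, whose existence follows from the Collar Theorem together with a suitable refinement of a nice cover. In such a frame the restrictions $\nabla_{E_i}\vec n\big|_{\partial\Omega}$ for $i<n$ are precisely what the shape operator $\mathcal K_1$ of~\eqref{eq.shape} and the second fundamental form $\mathcal K_2$ of~\eqref{de.secondff} encode. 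Using the boundary condition $\mathbf b\omega=0$, the Leibniz rule~\eqref{eq.nabla-form} for $\nabla_X\omega$, the local formula~\eqref{eq-nabla-d} for $d$ in terms of $\nabla$, and the local formula~\eqref{eq.d*} for $d^{*}$, I would reexpress $\mathbf i_{\vec n}d\omega$ and $\mathbf i_{\vec n}\omega,d^{*}\omega$ on $\partial\Omega$ in terms of the components of $\nabla\omega|_{\partial\Omega}$. The purely normal covariant derivative $\nabla_{\vec n}\omega$ then cancels out of $B(\omega)$, and what remains are tangential contributions of the form $\omega(\dots,\nabla_{E_i}\vec n,\dots)$ or $\omega(\vec n,\dots,\mathcal K_2(E_i,\cdot),\dots)$. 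A direct comparison with the definitions~\eqref{eq.Kn1}--\eqref{eq.Ktp} of $\mathcal K^{(p)}_{\mathbf b}$ exactly matches these expressions and closes the argument.
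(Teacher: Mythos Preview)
Your proposal is correct and follows essentially the same route as the paper: density reduction to $\Lambda^{p}\mathcal C^{\infty}_{\mathbf b}$, the Green's formula for the Hodge Laplacian together with the integration-by-parts identity for the Bochner Laplacian, the Weitzenb\"ock formula \eqref{eq.Weitzen} to pass between them, and finally a frame computation (adapted so that $E_{n}|_{\partial\Omega}=\vec n$) using \eqref{eq.nabla-form}, \eqref{eq-nabla-d}, \eqref{eq.d*} and the boundary condition to identify the residual boundary term with $\int_{\partial\Omega}\langle \mathcal K^{(p)}_{\mathbf b}\omega,\omega\rangle_{\Lambda^{p}}\,d\mu_{\partial\Omega}$. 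The only cosmetic difference is that the paper states the two Green identities \eqref{Green-Laplacian} and \eqref{H1-Bochner} up front and then reduces directly to the pointwise boundary identity \eqref{eq.bdy-term}, treating the cases $\mathbf n\omega=0$ and $\mathbf t\omega=0$ separately, whereas you phrase the Bochner integration by parts via an auxiliary vector field; the content is the same.
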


\noindent
The statement of Theorem~\ref{th.H1} is essentially the statement of \cite[Theorem~2.1.5]{Sch}
and the content of its proof in the case $\mathbf t \omega =0$, and
is closely related to the statement of \cite[Theorem~2.1.7]{Sch}
in the case $\mathbf n \omega =0$. We have nevertheless
 to compute the exact form of $\mathcal K_{\mathbf n}^{(p)}$ and especially
of $\mathcal K_{\mathbf n}^{(1)}$ in the latter case. We also give a complete proof in the case 
$\mathbf t \omega =0$ for the sake of clarity.

\begin{proof}
By density of $\Lambda^{p}\mathcal{C}^{\infty}_{\mathbf{b}}$
in $\Lambda^{p}H^{1}_{\mathbf b}$ for $\mathbf b\in \{\mathbf t,\mathbf n\}$,
it is sufficient to prove Theorem~\ref{th.H1} for $\omega\in \Lambda^{p}\mathcal C_{\mathbf{b}}^{\infty}$.
Moreover,
it follows from the Weitzenb\"ock formula \eqref{eq.Weitzen} and from the Green's formulas for the Hodge
and Bochner
Laplacians,
\begin{multline}
\label{Green-Laplacian} 
 \mathcal D^{(p)}(\omega)
=
\langle \Delta\omega,\omega\rangle_{\Lambda^{p}L^{2}}
+\int_{\partial\Omega}\big(\langle\textbf{i}_{\vec n}d\omega, \omega\rangle_{\Lambda^{p}}
-
\langle d^{*}\omega,\textbf{i}_{\vec n}\omega\rangle_{\Lambda^{p-1}} \big) \, d\mu_{\partial\Omega}
\end{multline}
and
\begin{align}
\label{H1-Bochner} 
 \|\omega\|^{2}_{\Lambda^{p}\dot H^{1}}
\ =\ 
 \langle\Delta_{B}\omega,\omega\rangle_{\Lambda^{p}L^{2}}
+\int_{\partial\Omega}\langle\nabla_{\vec n} \omega,\omega\rangle_{\Lambda^{p}} d\mu_{\partial\Omega}\,,
\end{align}
that
for any $\omega\in\Lambda^{p}\mathcal{C}^{\infty}$, the
 expression
 $$
\|\omega\|^{2}_{\Lambda^{p}\dot H^{1}}-
 \mathcal D^{(p)}(\omega) +\langle {\rm{Ric}}^{(p)}\omega ,\omega \rangle_{\Lambda^{p}L^{2}}
$$
 reduces to the boundary integral 
$$
\int_{\partial\Omega}\big(\langle\nabla_{\vec n} \omega,\omega\rangle_{\Lambda^{p}}-\langle\textbf{i}_{\vec n}d\omega, \omega\rangle_{\Lambda^{p}}
+
\langle d^{*}\omega,\textbf{i}_{\vec n}\omega\rangle_{\Lambda^{p-1}} \big) \, d\mu_{\partial\Omega}
$$
and we have then just to check that for any $\omega$ in $\Lambda^{p}\mathcal{C}^{\infty}_{\mathbf{b}}$, it holds
 \begin{equation}
 \label{eq.bdy-term}
 \langle \mathcal K_{\mathbf b}^{(p)}\omega,\omega\rangle_{\Lambda^{p}}\ =\ 
 \langle \textbf{i}_{\vec n}d\omega,\omega\rangle_{\Lambda^{p}} 
 -\langle d^{*}\omega,\textbf{i}_{\vec n}\omega\rangle_{\Lambda^{p-1}}
-\langle\nabla_{\vec n} \omega,\omega\rangle_{\Lambda^{p}}
\,,
\end{equation}
where  $\mathcal K_{\mathbf b}^{(p)}\in \mathcal L(\Lambda^{p}T^{*}\Omega\big|_{\partial\Omega})$
has been defined in \eqref{eq.Kn1}--\eqref{eq.Ktp}.\\
\\
\textbf{Case $\mathbf n \omega=0$ :}\\[0.1cm] 
 We have then $\langle d^{*}\omega,\textbf{i}_{\vec n}\omega\rangle_{\Lambda^{p}}=0$
and
 $$
 \langle \textbf{i}_{\vec n}d\omega,\omega\rangle_{\Lambda^{p}} - \langle\nabla_{\vec n} \omega,\omega\rangle_{\Lambda^{p}}
 \ =\ \langle \textbf{i}_{\vec n}d\omega-\nabla_{\vec n}\omega,\omega\rangle_{\Lambda^{p}} 
 \ =\ \langle \mathbf t\big(\textbf{i}_{\vec n}d\omega-\nabla_{\vec n}\omega\big),\omega\rangle_{\Lambda^{p}}
\,,$$
the last equality following again from $\mathbf n \omega=0$. It is then sufficient to show that
for any $\omega$ in $\Lambda^{p}\mathcal{C}^{\infty}_{\mathbf{n}}$, it holds
\begin{equation}
\label{eq.Kn-n}
\mathcal K_{\mathbf n}^{(p)}\omega\ =\ \mathbf t\big(\textbf{i}_{\vec n}d\omega-\nabla_{\vec n}\omega\big)\,.
\end{equation}

\noindent
Taking now $p$ tangential vector fields $X_{1},\dots, X_{p}$
and denoting for simplicity $\vec n$ by $X_{0}$,
we deduce from \eqref{eq-nabla-d} that: 
\begin{align*}
(\textbf{i}_{X_{0}}d\omega-\nabla_{X_{0}}\omega)(X_{1},\dots,X_{p})&=d\omega(X_{0},X_{1},\dots,X_{p})
-(\nabla_{X_{0}}\omega)(X_{1},\dots,X_{p})
\\
&=\sum_{k=1}^{p}(-1)^{k}(\nabla_{X_{k}}\omega)(X_{0},\dots,\dot{X_{k}},\dots, X_{p})\,.
\end{align*}
Moreover, using \eqref{eq.nabla-form}, the tangentiality of $X_{1},\dots, X_{p}$,  and $\mathbf n \omega=0$, we have for any $k\in\{1,\dots,p\}$:
 \begin{align*}
(\nabla_{X_{k}}\omega)(X_{0},\dots,\dot{X_{k}},\dots, X_{p})
\ =\ &\nabla_{X_{k}}\big(\omega(X_{0},\dots,\dot{X_{k}},\dots, X_{p})\big)\\
-\sum_{k\neq \ell=0,\dots,p}&\omega(X_{0},\dots,\nabla_{X_{k}}X_{\ell},\dots,\dot{X_{k}},\dots,X_{p})\\
\ =\ &
-\omega(\nabla_{X_{k}}X_{0},\dots,\dot{X_{k}},\dots,X_{p})\\
\ =\ &(-1)^{k}\omega(X_{1},\dots,\nabla_{X_{k}}X_{0},\dots,X_{p})\,.
\end{align*}
Hence, it holds for any $\omega\in \Lambda^{p}\mathcal{C}^{\infty}_{\mathbf{n}}$ and $p$ tangential vector fields $X_{1},\dots, X_{p}$, 
\begin{align*}
(\textbf{i}_{\vec n}d\omega-\nabla_{\vec n}\omega)(X_{1},\dots,X_{p})\ =\ 
\sum_{k=1}^{p}\omega(X_{1},\dots,\nabla_{X_{k}}\vec n,\dots,X_{p})\,,
\end{align*}
the r.h.s. being nothing but 
$\big(\,\mathcal K_{\mathbf n}^{(p)}\omega\,\big)(X_{1},\dots,X_{p})$
according to \eqref{eq.Kn1} and \eqref{eq.Knp}. This proves
\eqref{eq.Kn-n} and then concludes the proof in the case $\mathbf n\omega=0$.

\noindent\\
\textbf{Case $\mathbf t \omega=0$ :}\\[0.1cm] 
 We have here  $\langle \textbf{i}_{\vec n}d\omega,\omega\rangle_{\Lambda^{p}}=0$
 and from \eqref{eq.bdy-term}, we are then
 led  to compute more precisely
\begin{align*}
\langle d^{*}\omega,\textbf{i}_{\vec n}\omega\rangle_{\Lambda^{p-1}} + \langle\nabla_{\vec n} \omega,\omega\rangle_{\Lambda^{p}}
 &\ =\ \langle \vec n^{\flat}\wedge d^{*}\omega+ \nabla_{\vec n}\omega,\omega\rangle_{\Lambda^{p}}\\
 &\ =\ \langle \mathbf n\big( \vec n^{\flat}\wedge d^{*}\omega+ \nabla_{\vec n} \omega\big),\omega\rangle_{\Lambda^{p}}\\
& \ =\ 
\langle\vec n^{\flat}\wedge( d^{*}\omega+ \mathbf i_{\vec n}\nabla_{\vec n} \omega),\omega\rangle_{\Lambda^{p}}
\,,
\end{align*}
the second to last equality following from $\mathbf t \omega=0$
and the  last one from $\mathbf n \omega=\vec n^{\flat}\wedge (\mathbf i_{\vec n}\omega)$.
To conclude, it then remains to show that
for any $\omega$ in $\Lambda^{p}\mathcal{C}^{\infty}_{\mathbf{t}}$, it holds
\begin{equation}
\label{eq.K-t}
\mathcal K_{\mathbf t}^{(p)}\omega\ =\ -\mathbf n\big( \vec n^{\flat}\wedge d^{*}\omega+ \nabla_{\vec n} \omega\big)\ =\ -\vec n^{\flat}\wedge( d^{*}\omega+ \mathbf i_{\vec n}\nabla_{\vec n} \omega)\,.
\end{equation}
Denoting  by $(E_{1},\dots,E_{n})$ a local orthonormal frame such that $E_{n}=\vec n$ on $\partial\Omega$
and using \eqref{eq.d*}, we get
$$
-\mathbf n\big( \vec n^{\flat}\wedge d^{*}\omega+ \nabla_{\vec n} \omega\big)
\ =\ 
\vec n^{\flat}\wedge\big(\,\sum_{i=1}^{n-1}\mathbf i_{E_{i}}\nabla_{E_{i}}\omega\,\big)\,.
$$
Taking now $p-1$ tangential vector fields $X_{1},\dots,X_{p-1}$,
we then have:
\begin{align*}
-\mathbf n\big( \vec n^{\flat}\wedge d^{*}\omega+ \nabla_{\vec n} \omega\big)(\vec n,X_{1},\dots,X_{p-1})
\ =\    \sum_{i=1}^{n-1}(\nabla_{E_{i}}\omega)(E_{i},X_{1},\dots,X_{p-1})\,,
\end{align*}
where, for any $i\in\{1,\dots,n-1\}$, using \eqref{eq.nabla-form},
the tangentiality of the vector fields $X_{1},\dots,X_{p-1}$,
 $\mathbf t\omega=0$,
and denoting for simplicity $E_{i}$ by $X_{0}$,
\begin{align*}
(\nabla_{X_{0}}\omega)(X_{0},X_{1},\dots,X_{p-1})
&\  =\   \nabla_{X_{0}}\big(\omega(X_{0},X_{1},\dots,X_{p-1})\big)\\
&\qquad-\sum_{\ell=0}^{p-1}\omega(X_{0},\dots,\nabla_{X_{0}}X_{\ell},\dots,X_{p-1})\\
&\ =\ -\sum_{\ell=0}^{p-1}\omega(X_{0},\dots,(\nabla_{X_{0}}X_{\ell})^{\perp},\dots,X_{p-1})\\
&\ =\ -\sum_{\ell=0}^{p-1}\omega(X_{0},\dots,\mathcal K_{2}(X_{0},X_{\ell}),\dots,X_{p-1})\\
&\ =\ -\Big(\big(\mathcal K_{2}(X_{0},\cdot)\big)^{(p)}\omega\Big)(X_{0},\dots,X_{p-1})\,,
\end{align*}
where the notation $\big(\mathcal K_{2}(X_{0},\cdot)\big)^{(p)}\omega$
has been defined at the line following \eqref{eq.Ktp}.
Consequently, it holds for any $\omega\in \Lambda^{p}\mathcal{C}^{\infty}_{\mathbf{t}}$ and $p-1$ tangential vector fields $X_{1},\dots, X_{p-1}$, 
\begin{align*}
-\mathbf n\big( \vec n^{\flat}\wedge d^{*}\omega+ \nabla_{\vec n} \omega\big)(&\vec n,X_{1},\dots,X_{p-1})\\
\ &=\ -\sum_{i=1}^{n-1}\Big(\big(\mathcal K_{2}(E_{i},\cdot)\big)^{(p)}\omega\Big)(E_{i},X_{1},\dots,X_{p-1})\\
\ &=\ \big(\mathcal K_{\mathbf t}^{(p)}\omega\big)(\vec n,X_{1},\dots,X_{p-1})\,,
\end{align*}
which proves \eqref{eq.K-t} and concludes the proof of Theorem~\ref{th.H1}.
\end{proof}
\noindent
We end up this subsection with the 
proof of Theorem~\ref{th.Witten-Bochner}.
\begin{proof}[Proof of Theorem~\ref{th.Witten-Bochner}]
According to \eqref{eq.Hess}, \eqref{eq.df4}, Lemma~\ref{le.intbypart}, and to Theorem~\ref{th.H1},
we have just to show the identity
\begin{multline}
\| e^{f }\omega\|^{2}_{\Lambda^{p}\dot H^{1}(e^{-2f}d\mu)}
=
\|\omega\|^{2}_{\Lambda^{p}\dot H^{1}}
+\langle(\left|\nabla f\right|^{2}+\Delta f)\omega,\omega\rangle_{\Lambda^{p} L^{2}}\\
\label{eq.id-to-show}
+\int_{\partial \Omega}\langle\omega,\omega
\rangle_{\Lambda^{p}}\,\partial_{n}f~d \mu_{\partial \Omega}\,.
\end{multline}
Let now $(U_{j})_{j\in\{1,\dots,K\}}$ be any nice cover of $\Omega$ with associated local orthonormal frames
 $(E_{1},\dots,E_{n})$ (we drop the dependence on $j$ to lighten the notation)  and a subordinated partition of unity
 $(\rho_{j})_{j\in\{1,\dots,K\}}$. We have then the relation (see
\cite[p.~31]{Sch} for more details about the $H^{1}$ norm):
 \begin{equation*}
 \label{eq.H1-L2-0}
\| e^{f }\omega\|^{2}_{\Lambda^{p}\dot H^{1}(e^{-2f}d\mu)}\  = \ \sum_{j=1}^{K} \sum_{i=1}^{n}\int_{U_{j}} \rho_{j} \,\big\| e^{- f }\nabla_{E_{i}}(e^{f }\omega)\big\|^{2}_{\Lambda^{p}}\,d\mu\,.
 \end{equation*}
Moreover,  the relation $e^{- f }\nabla_{E_{i}}(\,e^{f }\cdot\,)=(\nabla_{E_{i}}f)\cdot\,+\,\nabla_{E_{i}}(\,\cdot\,)$ implies
\begin{align*}
\big\| e^{- f }\nabla_{E_{i}}(e^{f }\omega)\big\|^{2}_{\Lambda^{p}}= 
\big\| \nabla_{E_{i}}\omega\big\|^{2}_{\Lambda^{p}}+\big\|(\nabla_{E_{i}}f)\omega\big\|^{2}_{\Lambda^{p}}\!+2\langle \nabla_{E_{i}}\omega,(\nabla_{E_{i}}f)\omega\rangle_{\Lambda^{p}}
\end{align*}
so according to \eqref{eq.id-to-show}, we are simply led  to prove that
$$
2\sum_{j=1}^{K} \sum_{i=1}^{n}\rho_{j} \int_{U_{j}}\langle \nabla_{E_{i}}\omega,(\nabla_{E_{i}}f)\omega\rangle_{\Lambda^{p}}d\mu=
\langle(\Delta f)\omega,\omega\rangle_{L^{2}}
+\int_{\partial \Omega}\|\omega\|^{2}_{\Lambda^{p}}\,\partial_{n}f~d \mu_{\partial \Omega}\,.
$$
To conclude, we use the Green's formula \eqref{usual-Green'}
and  the formula \eqref{eq.d*}
 for the codifferential which give
\begin{align*}
\int_{\partial \Omega}\|\omega\|^{2}_{\Lambda^{p}}\,\partial_{n}f~d \mu_{\partial \Omega}
&= -\int_{\Omega} d^{*}\big(\|\omega\|^{2}_{\Lambda^{p}}\, df\big)d\mu\\
&= \sum_{j=1}^{K}\sum_{i=1}^{n}\int_{U_{j}}\rho_{j}\,\mathbf i_{E_{i}}\nabla_{E_{i}}\big(\|\omega\|^{2}_{\Lambda^{p}}\, df\big)d\mu\\
&=\sum_{j=1}^{K}\sum_{i=1}^{n}\int_{U_{j}}\rho_{j}\Big(2\langle\nabla_{E_{i}}\omega,\omega \rangle_{\Lambda^{p}} df(E_{i}) \\
&\qquad\qquad\qquad\qquad\qquad\qquad+ \|\omega\|^{2}_{\Lambda^{p}} (\nabla_{E_{i}}df)(E_{i})\Big)d\mu\\
&=
2\sum_{j=1}^{K}\sum_{i=1}^{n}\int_{U_{j}}\rho_{j}\langle \nabla_{E_{i}}\omega,(\nabla_{E_{i}}f)\omega\rangle d\mu
-
\langle(\Delta f)\omega,\omega\rangle_{L^{2}}.
\end{align*}
This implies \eqref{eq.id-to-show} and then concludes the proof of Theorem~\ref{th.Witten-Bochner}.
\end{proof}

\subsection{Proof of Theorem~\ref{th.main}}

We first prove $1.i)$ and then consider $p>0$ and $\omega\in \Lambda^{p-1}H_{\mathbf n}^{1}(d\nu)$
such that $d^{*}_{V}\omega =0$. Let us also consider the corresponding
form on the flat space: $$
\eta\ :=\ e^{-f}\,\omega\quad\text{where}\quad  f\ :=\ \frac V2\,.
$$
We have then in particular $\eta\,\in\, \Lambda^{p-1}H_{\mathbf n}^{1}$
and $d^{*}_{f}\eta =0$.
Note also
 that $\mathcal K_{\textbf n}^{(p)}\geq 0$ and ${\rm{Ric}}^{(p)}+2\,\Hess^{\!(p)}\!f>0$
together with Theorem~\ref{th.Witten-Bochner} imply that 
\begin{equation}
\label{eq.lowerbound}
\Delta_{f}^{\mathbf{n},(p)}\ \geq\ 
{\rm{Ric}}^{(p)}+2\,\Hess^{\!(p)}\!f\ >\ 0
\end{equation}
and therefore that
$0\in \varrho(\Delta_{f}^{\mathbf{n},(p)})$.
As already explained in the introduction (see \eqref{eq.HS0} there),
the trick   is to use now the following relation
which results easily from $d^{*}_{f}\eta=0$, \eqref{Green-distorted}, \eqref{eq.ortho-decomp''}, and \eqref{eq.ortho-decomp'''}:
\begin{align}
\nonumber
\left\| \,\eta - \pi_{f,\mathbf n}\eta\, \right\|_{L^{2}}^{2}
\ &=\ \langle \,(\Delta_{f}^{\mathbf{n},(p)})^{-1}\, d_{f}(\eta- \pi_{f,\mathbf n}\eta)\,,\,
d_{f}(\eta - \pi_{f,\mathbf n}\eta)\, \rangle_{L^{2}} \\
\ &=\ \langle \,(\Delta_{f}^{\mathbf{n},(p)})^{-1}\, d_{f}\eta\,,\,
d_{f}\eta\, \rangle_{L^{2}}\,,
\label{eq.HS}
\end{align}
where
$\pi_{f,\mathbf n}=\pi^{(p)}_{f,\mathbf n}$ denotes 
the orthogonal projection on $\Ker(\Delta^{\mathbf n,(p)}_{f})$.
The estimate to prove involving $\omega=e^{f}\,\eta$  is then a simple consequence of
 \eqref{eq.lowerbound}  and \eqref{eq.HS} 
 according to the  unitary equivalence
 $$
 L^{\mathbf n,(p)}_{V}\ =\ e^{f}\,\Delta^{\mathbf n,(p)}_{f}\,e^{-f}
 \quad\text{where}\quad  f\ =\ \frac V2
 \,.
 $$
The proof of 1.ii) is completely similar as well as the proofs 
of 2.i) and 2.ii).

\subsection{Proof of Corollary~\ref{co.main}}

This proof is similar to the previous one and we only prove it in the normal case, the tangential
case being completely analogous. In order to improve the latter result, 
we want to derive an estimate of the type  
\eqref{eq.lowerbound}
with $\Delta_{f}^{\mathbf{n},(1)}$ replaced by the self-adjoint unbounded operator $\Delta_{f}^{\mathbf{n},(1)}\big|_{\Ran\,d_{f}}$
defined in
 Proposition~\ref{pr.ortho-decomp}. To do so, we will in particular 
 make use of the nonnegative term
$\| e^{f}\cdot\|^{2}_{\dot H^{1}(e^{-2f}d\mu)}$
of the
integration by part formula \eqref{eq.Witten-Bochner}
stated  in Theorem~\ref{th.Witten-Bochner}.\\[0.1cm]
Let us then consider $\omega\in D(L_{V}^{\mathbf n,(0)})=\{ u\in H^{2}\cap H_{\mathbf n}^{1}(d\nu)\ \text{s.t.}\ \mathbf n du=0\ \text{on}\ \partial\Omega\}$
and
its corresponding
form on the flat space $$
\eta\ :=\ e^{-f}\omega\quad\text{where}\quad  f\ :=\ \frac V2\
$$
which consequently belongs to $D(\Delta^{\mathbf n,(0)}_{f})$.  Denoting by $(E_{1},\dots,E_{n})$  a local orthonormal frame
on $U\subset \Omega$,
we deduce
from the  Cauchy-Schwarz inequality the following relations
satisfied by the integrand of $\| e^{f}d_{f}\eta\|^{2}_{\dot H^{1}(e^{-2f}d\mu)}$ a.e. on $U$
and for every $N$ such that $\frac{1}{N}\in [-\infty,\frac1n)$ or $N=n$ if $f$ is constant:
\begin{align*}
\sum_{i=1}^{n}\big\| e^{- f }\nabla_{E_{i}}(e^{f } \,d_{f}\eta)\big\|^{2}_{\Lambda^{1}}
\ &\geq\ \frac{1}{n}\big( e^{- f }\,\Delta^{(0)}\,e^{f }\eta\big)^{2}\\
\ &=\ \frac{1}{n}\big( \Delta^{(0)}_{f}\eta-2\,\langle df,d_{f}\eta\rangle_{\Lambda^{1}}\big)^{2}\\
\ &\geq\  \frac{1}{N}\big( \Delta^{(0)}_{f}\eta\big)^{2}-\frac{4}{N-n}\,df\otimes df(d_{f}\eta,d_{f}\eta)\,.
\end{align*}
This implies, after integration on $\Omega$:
 \begin{align}
 \nonumber
\| e^{f}d_{f}\eta\|^{2}_{\dot H^{1}(e^{-2f}d\mu)}\,+\,
&\frac{4}{N-n}\int_{\Omega}df\otimes df(d_{f}\eta,d_{f}\eta)~d\mu\\
\label{eq.gainN}
\ &\geq\  \frac{1}{N}\| \Delta^{(0)}_{f}\eta\|_{L^{2}}^{2}\ =\ \frac{1}{N}\mathcal D^{(1)}_{f}(d_{f}\eta)
\,.
\end{align}
Moreover, $\mathcal K_{\textbf n}^{(1)}\geq 0$ and
$$
{\rm{Ric}}_{2f,N}\ :=\ {\rm Ric}\,+\,2\,\Hess f\,-\,\frac{4}{N-n}\,df\otimes df\ >\ 0
$$
together with Theorem~\ref{th.Witten-Bochner} and \eqref{eq.gainN} imply that 
\begin{equation}
\label{eq.lowerbound-BE-tensor}
(1-\frac1N)\Delta_{f}^{\mathbf{n},(1)}\big|_{\Ran\,d_{f}}\ \geq\ 
{\rm{Ric}}_{2f,N}\ >\ 0\,.
\end{equation}
The estimate to prove is then a simple consequence of
 \eqref{eq.lowerbound-BE-tensor} and of the
 relation   \begin{equation}
 \label{eq.HS'}
 \left\| \,\eta - \pi_{f,\mathbf n}\eta\, \right\|_{L^{2}}^{2}
 = \langle \,(\Delta_{f}^{\mathbf{n},(1)}\big|_{\Ran\,d_{f}})^{-1}\, d_{f}(\eta- \pi_{f,\mathbf n}\eta)\,,\,
d_{f}(\eta - \pi_{f,\mathbf n}\eta)\, \rangle_{L^{2}}
\end{equation}
 valid for any $\eta\in \Lambda^{0}H^{1}_{\mathbf n}=H^{1}(\Omega)$
 and resulting from \eqref{Green-distorted}, \eqref{eq.ortho-decomp''}, and \eqref{eq.ortho-decomp'''}.

\

\noindent
{\bf Acknowledgements}\\
 \noindent
This work was completed during a ``D\'el\'egation INRIA'' at CERMICS.
The author  thanks people there for their hospitality.


\end{document}